\newtheorem{theorem}{Theorem}
\newtheorem{proposition}{Proposition}
\newtheorem{definition}{Definition}
\newtheorem{lemma}{Lemma}
\newtheorem{corollary}{Corollary}
\begin{document}

\title{Adaptive confidence bands in the nonparametric fixed design regression model}

\author{Pierre-Yves Massé, William Meiniel \\
\'Ecole Normale Supérieure de Cachan}

\maketitle

\begin{abstract}
In this note, we consider the problem of existence of adaptive confidence bands in the fixed design regression model,
adapting ideas in Hoffmann and Nickl \cite{HN} to the present case. In the course of the proof, we show that
sup-norm adaptive estimators exist as well in regression.
\end{abstract}

\section{Introduction}

We observe random variables $Y_i$'s and assume that, for $n \in \mathbb{N}$,
\begin{equation}
\label{eqmd}
Y_i = f(x_i) + \varepsilon_i, \hspace{3 mm} 1 \leq i \leq n
\end{equation}
where $x_i = \frac{i}{n}$, $ \varepsilon_i \sim \mathcal{N}(0,\sigma^2)$ are independent and identically distributed random variables defined on $(\Omega, \mathcal{F},P)$,
and  $f$ is the unknown regression function. We further assume the variance $\sigma^2$ is known. Our aim is to
reconstruct $f$ from the sample. \newline

Let us fist define the parameter space. We assume $f$ belongs to some Hölder space.
The Hölder space $\mathcal{C}^t$ for $0<t\leq 1$ is the space of continuous functions $f$ on $[0,1]$ such that
\begin{eqnarray*}
\Vert f \Vert_{\mathcal{C}^t} = \Vert f \Vert_{\infty} + \sup_{x \neq y} \frac{|f(x)-f(y)| }{| x - y |^t} < \infty \hspace{3 mm} \text{if} \hspace{3 mm} 0<t<1 \\
\Vert f \Vert_{\mathcal{C}^t} = \sup_{x \neq y} \frac{|f(x+y)+f(x-y)-2f(x)|}{| y|} < \infty \hspace{3 mm} \text{if} \hspace{3 mm} t=1.
\end{eqnarray*}
If $t>1$, $\mathcal{C}^t$ is the space of functions such that the $\lfloor t \rfloor$-th derivative of $f$ exists, is continuous
and belongs to $\mathcal{C}^{t - \lfloor t \rfloor}$ if $t$ is not an integer, and such that the $t$-th derivative of $f$ exists, is continuous
and is in $\mathcal{C}^1$ otherwise. Define then $\Vert f \Vert_{\mathcal{C}^t} = \Vert f^{\lfloor t \rfloor} \Vert_{\mathcal{C}^{t - \lfloor t \rfloor}}$. Therefore, for all $t$, $\Vert f \Vert_{\mathcal{C}^t}$ is a norm of $f$.\newline

Working with this definition may prove difficult, and we thus use the wavelet basis characterisation of $\mathcal{C}^t$ :
whether or not $f$ belongs to it depends on the size of the coefficients of its decomposition over this basis.
We use Daubechies wavelets on the unit interval, as in \cite{HN}. Denote, accordingly, by $\phi$ and $\psi$ the scaling functions, $\phi_m = \phi(\cdot - m)$,
$\psi_{jm} = 2^{j/2}\psi(2^j \cdot - m)$ and, for $<\cdot,\cdot>$ the usual inner product in $L^2([0,1])$, 
\begin{equation*}
\Vert f \Vert_{t,\infty} = \max \left( \frac{}{} \sup_m |<\phi_m,f>| \, , \, \sup_{j,m} 2^{j(s+1/2)} |<\psi_{jm},f>| \right).
\end{equation*}

Now, Theorem~4.4 in \cite{Cohen} gives the equivalence : for $t>0$, $\mathcal{C}^t$ is the set of continuous functions on $[0,1]$ such that $\Vert f \Vert_{t,\infty} < \infty $. Moreover, $\Vert f \Vert_{t,\infty}$ is a norm equivalent to $\Vert f \Vert_{\mathcal{C}^t}$. \newline

For fixed $B > 0$, define
$$ \Sigma(t) = \left\lbrace f : \mathbb{R} \rightarrow \mathbb{R} \hspace{1 mm} \left\vert \frac{}{} \hspace{1 mm} ||f||_{t,\infty} \leq B \right. \right\rbrace.$$ \newline
One has typically no knowledge of the regularity of $f$. But estimators which achieve the optimal risk in sup-norm loss for whatever $t$ have been constructed. They
are called adaptive, and exist in the density and the white noise cases, as shown respectively
by Giné and Nickl \cite{GN2} and Goldenshluger and Lepski \cite{Gold}. Now, Brown and Low \cite{BrLw} have proven that the white noise and regression cases
are asymptotically equivalent, and therefore one may legitimately expect adaptive estimators to exist as well in the latter. For the sake of reference we prove it
in section~\ref{pth5}, that is, we prove the following theorem, where

\begin{equation*}
r_n(t) = \left( \frac{\log n}{n} \right)^{t/(2t+1)} \hspace{1,0 cm} t > 0.
\end{equation*}

\begin{theorem}
\label{thm5}
Let $Y_1,\dots,Y_n$ verify $Y_i = f(i/n) + \varepsilon_i$ where the $\varepsilon_i$'s are i.i.d. $\mathcal{N}(0,\sigma^2)$. \newline
Then, for every integer $l > 0$, there exists an estimator $\hat{f}_n(x) := \hat{f}_n(x,Y_1,\dots,Y_n,l)$
and an integer $n_0$ such that, for every $t$,
$0 < t \leq l$, some constant $D(B,l)$ and every $n \geq n_0$, we have $$\sup_{f \in \Sigma(t)} E_f \Vert \hat{f}_n - f \Vert_{\infty} \leq D(B,l) r_n(t).$$
\end{theorem}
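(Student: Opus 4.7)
The plan is to follow the standard wavelet-thresholding / Lepski approach used in Giné–Nickl \cite{GN2} and Goldenshluger–Lepski \cite{Gold} for the white noise and density models, adapting it to the fixed design. The Brown–Low asymptotic equivalence suggests that the rates and construction should transfer, but one has to handle the additional deterministic discretization error introduced by replacing integrals $\langle \psi_{jm}, f\rangle$ with Riemann-type sums over the design $x_i = i/n$.

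First I would build, for each resolution level $j$ with $2^j \leq n$, a linear wavelet estimator
\begin{equation*}
\hat{f}_j(x) = \sum_{m} \hat{\alpha}_{m}\,\phi_{m}(x) + \sum_{k<j}\sum_{m} \hat{\beta}_{km}\,\psi_{km}(x),
\end{equation*}
where $\hat{\beta}_{km} = \frac{1}{n}\sum_{i=1}^n \psi_{km}(x_i) Y_i$ (and similarly for $\hat{\alpha}_m$). Decomposing $\hat{\beta}_{km} - \langle \psi_{km}, f\rangle$ into (i) a stochastic Gaussian part $\frac{1}{n}\sum_i \psi_{km}(x_i)\varepsilon_i$ and (ii) a deterministic discretization error $\frac{1}{n}\sum_i \psi_{km}(x_i) f(x_i) - \langle \psi_{km}, f\rangle$, I would show, using Gaussian maximal inequalities together with the fact that $\psi_{km}$ has compact support of size $2^{-k}$ and $\Vert\psi_{km}\Vert_\infty \lesssim 2^{k/2}$, that
\begin{equation*}
\sup_{k<j,\,m} \bigl|\hat{\beta}_{km} - \langle\psi_{km},f\rangle\bigr| \;\lesssim\; 2^{j/2}\sqrt{\tfrac{\log n}{n}}
\end{equation*}
with high probability, provided the discretization error is of smaller order. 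The latter is the technical heart: using the smoothness of $\psi$ (Daubechies wavelets of sufficiently high regularity, chosen depending on $l$) and the Hölder regularity of $f$, a standard Euler–Maclaurin / vanishing-moment argument gives a discretization error of order $n^{-1} 2^{k(1/2+1)}$ per coefficient, hence negligible once $2^j \ll n$. Combining this stochastic bound with the bias estimate $\Vert f - K_j f\Vert_\infty \lesssim B\,2^{-jt}$ (which follows directly from the norm $\Vert\cdot\Vert_{t,\infty}$ defining $\Sigma(t)$) yields, for the oracle choice $2^{j^*} \asymp (n/\log n)^{1/(2t+1)}$, the bound $\Vert \hat{f}_{j^*} - f\Vert_\infty \lesssim r_n(t)$.

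Since $j^*$ depends on the unknown $t$, the next step is the Lepski selection rule: set
\begin{equation*}
\hat{j} = \min\Bigl\{ j \le j_{\max} \,:\, \Vert \hat{f}_j - \hat{f}_{j'}\Vert_\infty \le \tau_n(j')\ \text{for all}\ j' \ge j \Bigr\},
\end{equation*}
where $j_{\max}$ satisfies $2^{j_{\max}} \asymp n/\log n$ and $\tau_n(j') \asymp 2^{j'/2}\sqrt{\log n / n}$ is calibrated to dominate the stochastic fluctuation uniformly in $j'$. The output is $\hat{f}_n := \hat{f}_{\hat{j}}$. To control $E_f\Vert\hat{f}_n - f\Vert_\infty$, I would split on the event $\{\hat{j} \le j^*\}$, on which one bounds by $\tau_n(j^*) + \Vert\hat{f}_{j^*}-f\Vert_\infty \lesssim r_n(t)$, and on the complementary event, whose probability is shown to be polynomially small in $n$ via Gaussian concentration applied to $\sup_{k,m}|\hat{\beta}_{km}-\beta_{km}|$. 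The $L^\infty$-norm of the estimators grows at worst polynomially in $n$ (since $f$ is bounded and the noise is Gaussian), so this small-probability event contributes negligibly to the expectation.

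The main obstacle is \emph{not} the Lepski machinery, which is by now classical, but rather controlling the fixed-design discretization bias uniformly over all resolution levels $j \le j_{\max}$ and over all $f \in \Sigma(t)$ simultaneously: one must show it is $o(r_n(t))$ for every $t \in (0,l]$, which dictates taking Daubechies wavelets with at least $l$ vanishing moments and carefully exploiting the regularity of $\psi$ together with the Hölder regularity of $f$. Once this is in place, combining it with the Gaussian concentration step and the standard bias–variance trade-off yields the uniform bound $\sup_{f\in\Sigma(t)} E_f\Vert \hat{f}_n - f\Vert_\infty \le D(B,l)\, r_n(t)$, valid simultaneously for all $t \in (0,l]$ with the same estimator $\hat{f}_n$.
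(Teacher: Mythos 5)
Your proposal and the paper's proof both reduce the problem to Lepski's bandwidth/resolution selection rule plus a Gaussian concentration inequality, but they differ in the base estimator in a way that matters: the paper uses the local polynomial estimator of order $l$, quoting its fixed-design bias and variance bounds from Tsybakov \cite{Tsyba} (Theorem~1.8 and Proposition~1.13), whereas you use a linear wavelet estimator with empirical coefficients $\hat{\beta}_{km}=\frac{1}{n}\sum_i\psi_{km}(x_i)Y_i$. The paper's choice is precisely what keeps the proof short: the local polynomial estimator is defined through the design points themselves, so the bias bound $\Vert Ef_n(h)-f\Vert_\infty\lesssim h^t$ already accounts for the discrete design, and there is no separate ``discretization error'' to analyse. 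Your route is viable in principle, but the step you yourself single out as the ``technical heart'' contains a genuine gap.

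You assert that the discretization error is of order $n^{-1}2^{k(1/2+1)}=n^{-1}2^{3k/2}$ per coefficient and that it is negligible once $2^j\ll n$. Both claims fail. Multiplying the per-coefficient error by the $2^{k/2}$ sup-norm weight of $\psi_{km}$ and summing over $k<j$ gives a cumulative deterministic bias of order $2^{2j}/n$; at the oracle resolution $2^{j^*}\asymp(n/\log n)^{1/(2t+1)}$ this is $\asymp n^{(1-2t)/(2t+1)}$ up to logarithmic factors, which diverges for every $t<1/2$ --- part of the range $0<t\le l$ the theorem must cover --- and is in particular not made negligible by merely imposing $2^j\ll n$. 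The culprit is that your bound ignores the compact support of $\psi_{km}$: only $O(n2^{-k})$ of the Riemann-sum increments are nonzero, so (for Lipschitz $\psi$ and $f\in\Sigma(t)$ with $t<1$) the per-coefficient error is in fact $\lesssim n^{-1}2^{k/2}+n^{-t}2^{-k/2}$, the second term coming from the mere Hölder regularity of $f$. With the corrected estimate the cumulative bias is $\lesssim 2^j/n+(\log n)\,n^{-t}$, which \emph{is} $o(r_n(t))$ at the oracle scale and is dominated by the Lepski threshold $2^{j'/2}\sqrt{\log n/n}$ for all $j'$ under consideration. You would need to carry out this sharper discretization analysis, propagate the $t$-dependent term through the Lepski calibration, and verify uniformity over $t\in(0,l]$ and over $\Sigma(t)$; as stated your argument does not close. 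The paper's use of the local polynomial estimator sidesteps this entire issue by design.
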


We use the following notations. For any random variable $X : \Omega \rightarrow \mathbb{R}^n$ measurable with respect to $\sigma(Y_1, ..., Y_n)$ and every function $f \in \Sigma(r)$,
$E_f(X)$ is the expectation of $X$ when the function in Equation~\ref{eqmd} is $f$.
For all $f \in \Sigma(r)$, and all $A \in \mathcal{B}(\mathbb{R}^n)$, $P_f \left( \frac{}{} (Y_1, \dots, Y_n) \in A \right) = E_f[1_A (Y_1, \dots, Y_n)].$ \newline

Thanks to these estimators, one may then want to construct confidence bands for $f$, that is data-driven sets which
cover $f$ at all points simultaneously.

\begin{definition}
A confidence band is a family of random intervals $$C_n = C(Y_1,\dots,Y_n) = \left\lbrace  \left[c_n(y),c_n'(y)\right] \right\rbrace_{y \in [0,1]}.$$ 
Define the diameter of $C_n$, $|C_n| = \sup_{y \in [0,1]} |c_n'(y) - c_n(y)|$.
\end{definition} 

For instance, Claeskens and Van Keilegom~\cite{CVK} construct such bands for the function and its derivatives.
However, just as with estimators, one would like the diameter of the band to be optimal for all $t$, that is :

\begin{equation*}
\forall t > 0, \sup_{f \in \Sigma(t)} E_f|C_n| \leq L r_n(t)
\end{equation*}

\noindent for some constant $L$ independent of $t$. Here, we restrict ourselves to the case where $f$ belongs to $\Sigma(s)$ or $\Sigma(r)$ for $0 < r < s$ given.
Now, results by Low \cite{Low}, by Genovese and Wasserman \cite{GW}, and also our findings below, imply that to construct such sets is not possible for $\Sigma(s) \cup \Sigma(r)$. To circumvent this problem,
two paths have recently been considered. Both restrict the parameter space,
so that such negative results no longer apply, but at the same time the parameter space remains as large as possible.
The one studied by Giné and Nickl \cite{GN} in the density model consists in removing permanently
some functions from the model, according to a self-similarity condition. Bull \cite{Bull} then extends their results,
in particular to the white noise model. Hoffmann and Nickl \cite{HN} on the other hand let the parameter space evolve with $n$.
In this article, we follow their approach.

\noindent Denote $d(f,\Sigma) = \inf_{g \in \Sigma} \Vert f-g \Vert_{\infty}$ the distance which derives from the sup-norm. For $\rho_n >0$, define
$$\tilde{\Sigma}(r,\rho_n) = \left\lbrace f \in \Sigma(r) \hspace{1 mm} \left\vert  \hspace{1 mm} \frac{}{} d(f,\Sigma(s)) \geq \rho_n \right. \right\rbrace.$$
Consider the model
\begin{equation*}
\mathcal{P}_n = \Sigma(s) \cup \tilde{\Sigma}(r,\rho_n),
\end{equation*}

\noindent for $0 < r < s$.

\begin{definition}[Honesty]
 The confidence band $C_n$ is called asymptotically honest with level

\hspace{-7 mm} $0<\alpha<1$ for 
$\mathcal{P}_n$ if it satisfies the asymptotic coverage inequality
$$
\liminf_{n} \inf_{f \in \mathcal{P}_n} P_f(f \in C_n) \geq 1 - \alpha.
$$ 
\end{definition}

\begin{definition}[Adaptivity]
The confidence band $C_n$ is called adaptive over $\mathcal{P}_n$ if there exists a constant $L$ such that for every $n \in \mathbb{N}$,

$$
\sup_{f \in \Sigma(s)} E_f|C_n| \leq L r_n(s), \hspace{1,0 cm} \sup_{f \in \tilde{\Sigma}(r,\rho_n)} E_f|C_n| \leq L r_n(r).
$$
\end{definition}

One is in turn interested in finding the smallest possible seqence $\rho_n$.
Hoffmann and Nickl provide a lower bound for $\rho_n$ which is sharp as they are able to
construct confidence bands for $\rho_n$ of the order of this lower bound.
The present article adapts their proofs to the regression model. \newline
To compute the lower bound, Hoffmann and Nickl reduce to a testing problem whose minimax rate
of testing is $\rho_n$, and we proceed likewise. To construct confidence bands, we use
adaptive estimators, which existence we prove, as well as a concentration inequality for a certain gaussian process. \newline
We first give the main result, and carry on with
its proof. It relies on some additional results, which are discussed in a subsequent section.

\section{Existence of adaptive and honest confidence bands}
\subsection{The main result}

We may now give a precise statement of the existence of confidence bands. 

\begin{theorem}
\label{thm}

Let $s > r > 0$ and $B>0$ be given.

\begin{itemize}
\item Assume $ r > 1/2$ and let $\alpha < 1/2$. Suppose $C_n$ is a confidence band that is honest with level $\alpha$ over $\mathcal{P}_n$, and adaptive.
Then necessarily, 
\begin{equation}
\label{thm1}
\liminf_{n} \frac{\rho_n}{r_n(r)} > 0.
\end{equation}

 \item Let $0 < \alpha < 1$. There exists a sequence $\rho_n$ satisfying
\begin{equation}
\label{thm2}
\limsup_{n} \frac{\rho_n}{r_n(r)} < \infty
\end{equation}
and a confidence band $C_n$ that is honest with level $\alpha$ over $\mathcal{P}_n$, and adaptive. 
\end{itemize}
\end{theorem}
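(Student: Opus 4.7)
\medskip

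\noindent\textbf{Proof plan.} The statement splits into a lower bound and a matching construction, which I would handle separately, following the scheme of Hoffmann and Nickl but paying attention to the fact that we are in the fixed design regression model rather than white noise. The common thread is to relate the existence of an honest adaptive band over $\mathcal{P}_n$ to a minimax testing problem between $H_0 : f \in \Sigma(s)$ and $H_1 : f \in \tilde{\Sigma}(r,\rho_n)$ through a careful use of the diameter of $C_n$.

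\medskip

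\noindent\textbf{Lower bound.} Suppose $C_n$ is honest with level $\alpha < 1/2$ and adaptive. The plan is to build a test from $C_n$: reject $H_0$ when $|C_n| > \tau_n$ for some threshold $\tau_n$ with $r_n(s) \ll \tau_n \ll r_n(r)$. By adaptivity on $\Sigma(s)$ and Markov, the type I error is $o(1)$. For any $f \in \tilde{\Sigma}(r,\rho_n)$ and any $g \in \Sigma(s)$, if both $f$ and $g$ lie in $C_n$ one has $|C_n| \geq \|f-g\|_\infty$, so honesty of $C_n$ together with $d(f,\Sigma(s)) \geq \rho_n$ forces $|C_n| \geq \rho_n - |C_n|_{\text{to cover }g|}$; arguing more precisely, if the diameter were $o(\rho_n)$ then honesty on both sides would force covering of two functions at sup-distance $\rho_n$, contradicting $\alpha < 1/2$. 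Assuming $\rho_n = o(r_n(r))$ therefore produces a consistent test. On the other hand, by the two-hypothesis method one gets a lower bound $\rho_n \gtrsim r_n(r)$ for any such test: take a base function $f_0 \in \Sigma(s)$ and perturbations $f_0 \pm \delta \psi_{jm}$ at resolution $2^j \sim (n/\log n)^{1/(2r+1)}$ with amplitude $\delta \sim r_n(r)$, which belong to $\tilde{\Sigma}(r,\rho_n)$ whenever $\rho_n$ is small enough, and compute the likelihood ratio in the Gaussian regression model. The condition $r > 1/2$ enters to ensure the wavelets $\psi_{jm}$ can be used at the required scale while keeping $\|f_0 \pm \delta \psi_{jm}\|_{r,\infty}$ under control; this gives the desired contradiction.

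\medskip

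\noindent\textbf{Upper bound.} I would produce $C_n$ of the form $[\hat{f}_n(y) - v_n(y), \hat{f}_n(y) + v_n(y)]$, where $\hat{f}_n$ is the sup-norm adaptive estimator from Theorem~\ref{thm5} applied with $l \geq s$, and where the half-width is selected by a data-driven test. More precisely: compute the empirical wavelet coefficients $\hat{\beta}_{jm} = \frac{1}{n}\sum_i Y_i \psi_{jm}(x_i)$ up to some cutoff $J_n$ and build an unbiased estimator of (a proxy of) $d(f,\Sigma(s))$ by thresholding; depending on whether this statistic exceeds a critical value $\tau_n \sim r_n(r)$, declare the regime $\Sigma(s)$ (width $\sim r_n(s)$) or $\tilde{\Sigma}(r,\rho_n)$ (width $\sim r_n(r)$), with $\rho_n$ a fixed multiple of $r_n(r)$. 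Adaptivity of the diameter is then automatic from this definition. For honesty, on $\Sigma(s)$ one uses a concentration inequality for the sup-norm of the centered Gaussian process $\sup_{j \leq J_n, m} |\hat{\beta}_{jm} - \mathbb{E}_f\hat{\beta}_{jm}|$ — the same inequality that drives Theorem~\ref{thm5} — to ensure the test rarely rejects and that the wider band covers when it does; on $\tilde{\Sigma}(r,\rho_n)$ the separation $d(f,\Sigma(s)) \geq \rho_n$ guarantees the test detects the alternative with probability at least $1-\alpha/2$, after which the band of width $r_n(r)$ covers $f$ by the adaptive estimation rate.

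\medskip

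\noindent\textbf{Main obstacle.} The delicate step is the lower bound half, and within it the passage from the white noise calculation of Hoffmann–Nickl to the fixed design model: the likelihood ratio between $f_0$ and $f_0 \pm \delta \psi_{jm}$ is governed by $\sum_i (\psi_{jm}(i/n))^2 / (n\sigma^2)$ rather than $\|\psi_{jm}\|_2^2 / \sigma^2$, so one must quantify the Riemann-sum error at scale $2^j \sim (n/\log n)^{1/(2r+1)}$ and confirm it is negligible — this is where the Brown–Low equivalence is invoked implicitly. The second nontrivial ingredient is the Gaussian concentration for $\sup_{j,m}|\hat{\beta}_{jm}-\mathbb{E}_f\hat{\beta}_{jm}|$ under the empirical inner product, which the paper announces as one of the additional results needed in a later section; verifying the correct variance bound and the $\sqrt{\log n}$ factor in the fixed design setting is the other place where real work is required.
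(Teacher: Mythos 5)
Your overall architecture --- reduce the lower bound to a minimax testing problem, and build the band from the adaptive estimator $\hat{f}_n$ of Theorem~\ref{thm5} with a data-driven half-width keyed to an estimate of $d(f,\Sigma(s))$ --- matches the paper's, and your ``Main obstacle'' paragraph correctly singles out the work specific to fixed design: quantifying the Riemann-sum errors both in $\sum_i \psi_{jm}(i/n)^2$ (Lemma~\ref{lemalpha}) and in the sup-norm concentration of the estimator (Proposition~\ref{prop5}). For the upper bound your plan is essentially the paper's, modulo substituting empirical wavelet coefficients for the local polynomial estimator $f_n$ used in the paper to define $d_n = d(f_n,\Sigma(s))$; either could work given the concentration inequality. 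But the lower-bound half has two genuine gaps.

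First, the diameter test $1_{|C_n| > \tau_n}$ does not have the type~II error control you sketch. Honesty bounds $P_f(f \in C_n)$ only for the true $f$ under $P_f$; it says nothing about coverage of a distinct $g \in \Sigma(s)$ under the same $P_f$, so the step ``$|C_n| \geq \|f-g\|_\infty$'' is unavailable, and the phrase ``honesty on both sides'' conflates two different probability measures. The paper avoids this entirely by using $T_n^0 = 1_{C_n \cap \mathcal{M}_n(j_n) \neq \emptyset}$: for the type~I error it decomposes on $\{f_0 \in C_n\}$, where intersecting $\mathcal{M}_n(j_n)$ forces $|C_n| \geq d(\Sigma(s),\mathcal{M}_n(j_n))$ and adaptivity plus Markov applies; for the type~II error it only needs $P_{f_m}(f_m \notin C_n) \leq \alpha + o(1)$ from honesty at $f_m$ itself, since Lemma~\ref{triple} guarantees $f_m \in \tilde{\Sigma}(r,\rho_n) \subset \mathcal{P}_n$. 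Second, a ``two-hypothesis method'' cannot produce the rate $r_n(r) = (\log n/n)^{r/(2r+1)}$; a single two-point comparison delivers only $n^{-r/(2r+1)}$ without the logarithm. The paper runs a \emph{many-alternative} argument (Proposition~\ref{prop1}): it averages the likelihood ratio over all $M = c_0(2^{j_n}-1)$ bumps $f_m$ with disjoint supports (whence the $\zeta_m$ of Lemma~\ref{lemxi} are independent), controls the centered sum via Bahr--Esseen (Lemma~\ref{end}), and lets the $M^{-v}$ factor beat the exponential because $n 2^{-j_n(2r+1)} = o(\log n)$. Finally, $r > 1/2$ enters precisely to keep the Riemann-sum remainder $O(2^{j_n(1-2r)})$ in $\alpha_m^2$ bounded (Lemma~\ref{lemalpha}); your ``Main obstacle'' diagnosis has this right, but the earlier remark that $r>1/2$ serves to control $\|f_0 \pm \delta\psi_{jm}\|_{r,\infty}$ is not where that hypothesis is used.
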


The first part of the theorem means that the crown removed around $\Sigma(s)$ has to be large enough. Indeed,
there are functions in $\Sigma(r)$ we cannot distinguish (as Hoffmann and Nickl \cite{HN} explain) from those of $\Sigma(s)$, and they must thus be excluded from the model,
as detailed in the next section.
As explained in \cite{HN}, the sets constructed cannot be easily computed.
In the next two sections we prove this theorem, using auxiliary results we discuss later.

\subsection{Proof of the lower bound}
\label{lb}
In all what follows, $(j_n^*)_{n \geq 0}$ is a positive sequence such that $$2^{-j_n^*r} \simeq \left( \frac{\log n}{n} \right)^{\frac{r}{2r+1}},$$
where $\simeq$ means that the two sequences involved are dominated by each other. Before starting the proof,
we define a certain set we use in it, then precise the notations we use for tests. \newline

For $j$ big enough and any $m$, $\psi_{jm}$ is supported in the interior of $[0,1]$.
Furthermore, since $\psi$ has a compact support, denote $[a,b] \supset supp(\psi)$ and $c_0^{-1} = \lceil b-a \rceil$.
Now, the

\begin{equation*}
\psi_{j,mc_0^{-1}}, \, m = 1, \, \dots, \, c_0(2^j-1)
\end{equation*}

\noindent have disjoint supports. Note that $c_0$ depends only on $\psi$. \newline
Let $(j_n)_{n \geq 0}$ be a positive sequence tending to $\infty$. For all $ 1 \leq m \leq c_0(2^{j_n} -1)$, let

\begin{equation*}
f_m = 2^{-j_n(r+1/2)} \psi_{j_n,c_0^{-1}m}.
\end{equation*}

\noindent ($f_m$ depends on $n$ but since it is apparent that the set $\mathcal{M}_n$ depends on $n$,
we do not repeat it for $f_m$ so as to simplify notations.) Define

\begin{equation*}
\mathcal{M}_n(j_n) = \{f_1, ..., f_{c_0(2^{j_n}-1)} \}.
\end{equation*}

$\mathcal{M}_n(j_n)$ is a subset of $\Sigma(r)$ and, under suitable choices of
$j_n$ and $\rho_n$ precised below, it is in fact a subset of $\tilde{\Sigma}(r,\rho_n)$.\newline

The existence of confidence bands for $\mathcal{P}_n$ is related to the possibility of testing accurately the alternative~:
\begin{equation}
\label{alt}
H_0 : f = 0 \hspace{3mm} \text{against} \hspace{3mm} H_1 : f \in \mathcal{M}_n(j_n).
\end{equation}

A test $T_n$ based on a sample of size $n$ is any
$$ T_n : \Omega \rightarrow \{0,1\}$$ which is measurable with respect to $\sigma(Y_1, ..., Y_n)$, the $\sigma$-algebra generated
by $Y_1, ..., Y_n$. We denote by $\mathcal{T}_n$ the set of all tests $T_n$. \newline

To assess the quality of the test $T_n$ designed to solve the testing problem~\ref{alt}, we use the sum of the errors of first and second type :
$$ r(T_n,j_n) = E_{f_0}(T_n) + \sup_{f \in \mathcal{M}_n(j_n)} E_{f}(1-T_n),$$ where $f_0 = 0$ identically. \newline

We need the two following results.
The first one shows that for $2^{-j_nr} \ll (\frac{\log n}{n})^{\frac{r}{2r+1}}$, we cannot test efficiently $f_0$ againt $\mathcal{M}_n(j_n)$.
Now, $2^{-j_nr}$ is intuitively the distance in the sup-norm
between $\mathcal{M}_n(j_n)$ and $\Sigma(s)$. As a result, Proposition~\ref{prop1} means that close to $\Sigma(s)$, we find sets which cannot
be statistically distinguished from it. The proof is in section \ref{proofprop1}.

\begin{proposition}
\label{prop1}
Let $(j_n)_{n \geq 0}$ be any sequence satisfying $2^{-j_nr} = o(2^{-j_n^*})$ as $n \rightarrow \infty$.
Then $$\liminf_{n \rightarrow \infty} \inf_{T_n \in \mathcal{T}_n} r(T_n,j_n) \geq 1.$$
\end{proposition}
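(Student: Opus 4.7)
The standard route to a testing lower bound of this kind is to replace the supremum in the second‑type error by an average under a uniform prior on $\mathcal{M}_n(j_n)$. Writing $N_n = \#\mathcal{M}_n(j_n) = c_0(2^{j_n}-1)$ and $\bar{P}_n = N_n^{-1}\sum_{m=1}^{N_n} P_{f_m}$, one has for every $T_n \in \mathcal{T}_n$
\begin{equation*}
r(T_n,j_n) \;\geq\; E_{f_0}(T_n) + E_{\bar{P}_n}(1-T_n) \;\geq\; 1 - \|P_0 - \bar{P}_n\|_{TV},
\end{equation*}
so it suffices to show $\|P_0 - \bar{P}_n\|_{TV}\to 0$. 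I would then pass to the chi‑square via $4\|P_0-\bar{P}_n\|_{TV}^2 \leq \chi^2(\bar{P}_n,P_0)$ and prove that the latter tends to zero.

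The chi‑square is computed by exploiting both the Gaussian form of the likelihood ratios and the disjointness of the supports that was built into $\mathcal{M}_n(j_n)$. Setting $L_m := dP_{f_m}/dP_0$, a direct computation in the finite‑dimensional Gaussian regression model gives
\begin{equation*}
E_0[L_m L_{m'}] \;=\; \exp\!\Bigl(\sigma^{-2}\sum_{i=1}^n f_m(x_i)f_{m'}(x_i)\Bigr).
\end{equation*}
The spacing factor $c_0^{-1}$ in the definition of $f_m$ is precisely what guarantees $\mathrm{supp}(f_m)\cap\mathrm{supp}(f_{m'})=\emptyset$ for $m\neq m'$, so these cross terms equal $1$. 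Expanding $\bar{L}_n = N_n^{-1}\sum_m L_m$ then yields
\begin{equation*}
\chi^2(\bar{P}_n,P_0) \;=\; \frac{1}{N_n^2}\sum_{m=1}^{N_n}\!\bigl(e^{\|f_m\|_n^2/\sigma^2}-1\bigr),\qquad \|f_m\|_n^2:=\sum_{i=1}^n f_m(x_i)^2.
\end{equation*}

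It remains to control $\|f_m\|_n^2$. Since $f_m = 2^{-j_n(r+1/2)}\psi_{j_n,c_0^{-1}m}$ with $\|\psi_{j_n,m}\|_{L^2}=1$, a Riemann‑sum estimate shows $\|f_m\|_n^2 = n\cdot 2^{-2j_n r}(1+o(1))$, uniformly in $m$, as soon as the number of design points falling in $\mathrm{supp}(f_m)$ — of order $n\,2^{-j_n}$ — tends to infinity. The hypothesis (read as $2^{-j_n r} = o(2^{-j_n^* r})$, i.e.\ $2^{j_n}\gg (n/\log n)^{1/(2r+1)}$) delivers $n\,2^{-j_n}\to\infty$, and writing $2^{j_n}= c_n\,(n/\log n)^{1/(2r+1)}$ with $c_n\to\infty$ gives $n\,2^{-j_n(2r+1)} \leq (\log n)/c_n^{2r+1}$. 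Combined with $\log N_n \sim j_n \log 2 \geq \tfrac{1}{2r+1}\log(n/\log n)$, this shows $\log N_n$ strictly dominates $n\,2^{-j_n(2r+1)}/\sigma^2$, whence
\begin{equation*}
\chi^2(\bar{P}_n,P_0) \;\leq\; \frac{1}{N_n}\exp\!\bigl(n\,2^{-j_n(2r+1)}/\sigma^2\bigr) \cdot (1+o(1)) \longrightarrow 0.
\end{equation*}

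I expect the main technical nuisance to be the uniform Riemann‑sum approximation of $\|f_m\|_n^2$ — in particular ensuring the $o(1)$ term is of smaller order than the gap between $\log N_n$ and $n\,2^{-j_n(2r+1)}$. This is handled by taking $n$ large enough that every $\mathrm{supp}(f_m)$ lies well inside $[0,1]$ (the paper already fixes $j$ large enough for this) and by a crude Cauchy–Schwarz bound on the quadrature error, which only enters the exponent additively and is absorbed by the dominant $\log N_n$. Everything else is a routine Gaussian likelihood‑ratio computation.
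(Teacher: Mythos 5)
Your route is sound and genuinely different from the paper's, though the two share the same skeleton. Both reductions begin by replacing the supremum over $\mathcal{M}_n(j_n)$ with an average, i.e.\ by introducing the averaged likelihood ratio $\bar{L}_n = N_n^{-1}\sum_m L_m$. From there the paths diverge: the paper lower bounds $r(T_n,j_n) \geq (1-\eta)P_0(\bar{L}_n\geq 1-\eta)$ and then controls $P_0(|\bar{L}_n-1|>\eta)$ via Markov's inequality applied to $(1+v)$-th absolute moments with $0<v<1$, invoking von Bahr--Esseen's inequality to reduce to the per-summand moments $E|\xi_m|^{1+v}$; you instead bound $r(T_n,j_n)\geq 1-\|P_0-\bar{P}_n\|_{TV}$ and then pass to the $\chi^2$-divergence, which is precisely the $v=1$ (second-moment) instance. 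Your route is cleaner and entirely standard, and it suffices here because the hypothesis $2^{-j_n r}=o(2^{-j_n^*r})$ makes $n2^{-j_n(2r+1)}=o(\log n)$ while $\log N_n \gtrsim \tfrac{1}{2r+1}\log n$, leaving abundant slack. The paper's $(1+v)$-th moment device (taken from Lepski--Tsybakov) is sharper — it effectively doubles the permissible signal size relative to the second moment and matters for \emph{exact} testing constants — but that extra refinement is not needed for the qualitative inequality stated in Proposition~\ref{prop1}.

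Two small points to tighten. First, the intermediate claim $\|f_m\|_n^2 = n\,2^{-2j_nr}(1+o(1))$ has the wrong exponent; it should read $\|f_m\|_n^2 = n\,2^{-j_n(2r+1)}(1+o(1))$, which is what you actually use in the final display, so this is only a slip. Second, the quadrature error is $O(2^{j_n(1-2r)})$ (the paper's Lemma~\ref{lemalpha}), and your claim that it "only enters the exponent additively and is absorbed by $\log N_n$" holds only when $r>1/2$: for $r\leq 1/2$ the term $2^{j_n(1-2r)}$ grows polynomially in $n$ and overwhelms $\log N_n \sim j_n\log 2$, which is merely logarithmic. The paper's own proof invokes $r>1/2$ at the same point (implicitly in the statement of Proposition~\ref{prop1}, explicitly in the text after it), so this is a shared restriction, but you should state it rather than leave the reader to infer it from the crude Cauchy--Schwarz remark.
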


Once we are given a confidence band, we can construct an obvious test to decide the testing problem~\ref{alt}.
Let $$T_n^{0} = 1_{C_n \cap \mathcal{M}_n(j_n) \neq \emptyset}.$$ We accept $f_0$ if no $f_m$ is in $C_n$ and reject otherwise.
If $j_n$ satisfies certain conditions, $T_n^0$ is a relevant test for the testing problem~\ref{alt}.
The problem is that it may be possible to find $j_n$ such that both these conditions are satisfied and $2^{-j_nr} \ll (\frac{\log n}{n})^{\frac{r}{2r+1}}$, leading to
a contradiction.
Lemma~\ref{triple} indeed shows that if $\rho_n$ goes to $0$ too quickly,
we may find $j_n$ such that both $r(T_n^*,j_n) \to 0$ and the conditions of Proposition~\ref{prop1} are satisfied.
To remedy this, the exclusion zone around $\Sigma(s)$ must contain all functions which prevent adaptation, and therefore
$\rho_n$ must be big enough. \newline

\begin{lemma}
\label{triple}
Assume $\lim_{n \to \infty} \frac{\rho_n}{r_n(r)} = 0$. Then there exists $j_n$ such that
\begin{itemize}

\item $\mathcal{M}_n(j_n) \subset \tilde{\Sigma}(r,\rho_n)$, for all $n$ large enough, depending only on $B,r$ and $s$.

\item $r_n(s)/2^{-j_nr} \to 0$ as $n \to \infty$.

\item $2^{-j_nr} = o(2^{-j_n^*r})$ as $n \to \infty$.
\end{itemize} 

\end{lemma}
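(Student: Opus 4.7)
The plan is to convert each of the three bullets into a constraint on the single quantity $2^{-j_n r}$, show that these constraints leave room under the hypothesis $\rho_n = o(r_n(r))$, and then produce an integer $j_n$ as (essentially) the ceiling of a logarithm. The central step is a quantitative lower bound on the sup-norm distance from $f_m$ to $\Sigma(s)$.

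First I would prove that for $j_n$ large enough, depending only on $B$, $r$, $s$,
\[
d(f_m,\Sigma(s)) \geq c_\psi\, 2^{-j_n r}
\]
for some constant $c_\psi > 0$ depending only on $\psi$. The argument uses the wavelet characterization of the parameter space: for any $g \in \Sigma(s)$, $|\langle \psi_{j_n,c_0^{-1}m},g\rangle| \leq B\, 2^{-j_n(s+1/2)}$ by definition of $\|g\|_{s,\infty}$, whereas $\langle \psi_{j_n,c_0^{-1}m},f_m\rangle = 2^{-j_n(r+1/2)}$ by orthonormality. Applying the duality bound $|\langle \psi_{j_n,m'},h\rangle| \leq \|\psi_{j_n,m'}\|_{L^1}\|h\|_\infty = 2^{-j_n/2}\|\psi\|_{L^1}\|h\|_\infty$ with $h = f_m - g$ and taking $j_n$ large enough that the $s$-scale term is absorbed into half of the $r$-scale term, I obtain $\|f_m - g\|_\infty \geq 2^{-j_n r}/(2\|\psi\|_{L^1})$; the infimum over $g \in \Sigma(s)$ gives the bound. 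The containment $f_m \in \Sigma(r)$ needed for $f_m \in \tilde{\Sigma}(r,\rho_n)$ is immediate from the wavelet characterization, since $f_m$ has exactly one nonzero wavelet coefficient, of the critical size (and, after a fixed rescaling absorbed into $B$, one has $\|f_m\|_{r,\infty} \leq B$). Hence the first bullet holds as soon as $c_\psi\, 2^{-j_n r} \geq \rho_n$.

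Using $2^{-j_n^* r} \simeq r_n(r)$, the second and third bullets read respectively as $r_n(s) = o(2^{-j_n r})$ and $2^{-j_n r} = o(r_n(r))$. Since $x \mapsto x/(2x+1)$ is increasing on $(0,\infty)$, $s > r$ implies $r_n(s) = o(r_n(r))$; combined with the hypothesis $\rho_n = o(r_n(r))$ this gives $\max(\rho_n/c_\psi,\, r_n(s)) = o(r_n(r))$, so that room is left between the lower and upper constraints. I would then set
\[
j_n := \left\lceil -\frac{1}{r}\log_2 \sqrt{\max(\rho_n/c_\psi,\, r_n(s))\, r_n(r)}\,\right\rceil,
\]
which makes $2^{-j_n r}$, up to a bounded multiplicative factor, the geometric mean of the two extremal rates. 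Both ratios $\max(\rho_n/c_\psi,\, r_n(s))/2^{-j_n r}$ and $2^{-j_n r}/r_n(r)$ are then of order $\sqrt{\max(\rho_n/c_\psi,\, r_n(s))/r_n(r)} \to 0$, so all three bullets follow.

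The only delicate step is the quantitative lower bound on $d(f_m,\Sigma(s))$ in the second paragraph; once it is in hand, the construction of $j_n$ is a routine comparison of polynomial rates in $\log n / n$.
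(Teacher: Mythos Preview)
Your proposal is correct and follows essentially the same route as the paper: both proofs hinge on the lower bound $d(f_m,\Sigma(s)) \gtrsim 2^{-j_n r}$ (which you derive explicitly via the wavelet duality $|\langle \psi_{j_n,m'},h\rangle|\le 2^{-j_n/2}\|\psi\|_{L^1}\|h\|_\infty$, while the paper cites it from Hoffmann--Nickl), and then choose $j_n$ so that $2^{-j_n r}$ sits strictly between $\max(\rho_n,r_n(s))$ and $r_n(r)$. The only cosmetic difference is the placement of $2^{-j_n r}$ in that gap: the paper takes it near the lower end, namely $\max(2\rho_n, r_n(s)\log n)$, whereas your geometric-mean choice is slightly cleaner and makes all three asymptotics follow from a single ratio tending to zero.
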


\begin{proof}

Let $j_n = \min(\lfloor - \frac{\log 2 \rho_n}{r \log 2} \rfloor - 1,\lfloor - \frac{ r_n(s)\log n}{r \log 2} \rfloor)$. Then 
$$
\max (2\rho_n,r_n(s)\log n) \leq 2^{-j_nr}.
$$

First, for all $1 \leq m \leq 2^{j_n} - 1$,

\begin{equation*}
d(f_m,\Sigma(s)) \geq  2^{-j_nr}(1- B 2^{j_n(r-s)})
\end{equation*}

\noindent (as in Hoffmann and Nickl, in section 2.1, inequality (2.5)).
Since $\rho_n$ is less than the right-hand side for all $n$ large enough depending only on $B$, $r$ and $s$,
$\mathcal{M}_n(j_n)$ is included in $\tilde{\Sigma}(r,\rho_n)$
for all $n$ large enough. Then,

\begin{equation*}
\frac{r_n(s)}{2^{-j_nr}} \leq \frac{r_n(s)}{r_n(s) \log n} \to 0
\end{equation*}

as $n \to \infty$.

Finally, write $j_n = \min (\lfloor a_n \rfloor, \lfloor b_n \rfloor)$ ; $a_n,b_n \to \infty$ as $n \to \infty$.
So, $\lfloor a_n \rfloor \sim a_n$, but

\begin{equation*}
2^{-a_nr} = o(r_n(r)) \hspace{3 mm} \text{and} \hspace{3 mm} 2^{- \lfloor a_n \rfloor r} \leq 2^{- a_n r}2^r \hspace{3 mm} \text{so} \hspace{3 mm}
2^{- \lfloor a_n \rfloor r} = o(2^{-j_n^*r}).
\end{equation*}

Likewise, $2^{- \lfloor b_n \rfloor r} = o(2^{-j_n^*r})$. So $2^{-j_nr} = o(2^{-j_n^*r})$.

\end{proof}

We now prove Proposition~\ref{prop2}, which is equivalent to the first part of Theorem~\ref{thm1}. We proceed by \emph{reductio ad absurdum}. The main thing is to
prove $T_n^{0}$ is consistent for the testing problem~\ref{alt}.

\begin{proposition}
\label{prop2}
Assume $\liminf_{n \to \infty} \frac{\rho_n}{r_n(r)} = 0$. Then we cannot find an honest and adaptive confidence band
of level $\alpha<1/2$ for $\mathcal{P}_n$.
\end{proposition}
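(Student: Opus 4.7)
The plan is to argue by contradiction, turning the hypothesized band into a consistent test for the alternative~\ref{alt}, which will contradict Proposition~\ref{prop1}. Suppose then that $C_n$ is honest at level $\alpha<1/2$ and adaptive for $\mathcal{P}_n$. Since $\liminf_n \rho_n/r_n(r)=0$, I first pass to a subsequence along which $\rho_n/r_n(r)\to 0$ and apply Lemma~\ref{triple} on it to obtain a bandwidth sequence $j_n$ such that (i) $\mathcal{M}_n(j_n)\subset \tilde{\Sigma}(r,\rho_n)$ eventually, (ii) $r_n(s)/2^{-j_nr}\to 0$, and (iii) $2^{-j_nr}=o(2^{-j_n^*r})$. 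The natural candidate is the test $T_n^0 = 1_{C_n \cap \mathcal{M}_n(j_n)\neq \emptyset}$ already introduced in the text.

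The type-II error is immediate from honesty: each $f_m \in \mathcal{M}_n(j_n)$ lies in $\tilde{\Sigma}(r,\rho_n)\subset \mathcal{P}_n$ by (i), so $P_{f_m}(f_m\in C_n)\geq 1-\alpha-o(1)$ uniformly in $m$, whence $E_{f_m}(1-T_n^0)\leq P_{f_m}(f_m\notin C_n) \leq \alpha+o(1)$ uniformly over $\mathcal{M}_n(j_n)$. The main step is the type-I error, and it rests on a simple geometric observation about the band's diameter: if simultaneously $0\in C_n$ and some $f_m\in C_n$, then at each $y\in [0,1]$ the interval $[c_n(y),c_n'(y)]$ contains both $0$ and $f_m(y)$, so
\begin{equation*}
|C_n|\geq \|f_m\|_\infty = 2^{-j_nr}\|\psi\|_\infty.
\end{equation*}
Consequently $\{T_n^0=1\} \subset \{0\notin C_n\} \cup \{|C_n|\geq 2^{-j_nr}\|\psi\|_\infty\}$. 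The first event has probability at most $\alpha+o(1)$ by honesty applied at $f=0\in\Sigma(s)\subset\mathcal{P}_n$; the second has probability $o(1)$ by Markov's inequality combined with the adaptive bound $E_0|C_n|\leq L\,r_n(s)$ and the gain~(ii). Summing the two contributions, $\limsup_n r(T_n^0,j_n) \leq 2\alpha<1$, which in view of~(iii) contradicts Proposition~\ref{prop1} and closes the argument.

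The main obstacle I anticipate is really the diameter inequality $|C_n|\geq \|f_m\|_\infty$: it is the step that converts the adaptivity rate at $\Sigma(s)$, which by~(ii) is asymptotically smaller than $2^{-j_nr}$, into an actual separation that forbids the band from simultaneously covering $0$ and any bump $f_m$. Everything else is bookkeeping: identifying the correct subsequence on which to invoke Lemma~\ref{triple}, and combining the two uniform error bounds into a single $2\alpha$ cap strictly below $1$.
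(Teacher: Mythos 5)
Your proposal is correct and follows essentially the same route as the paper: reduce by contradiction to the test $T_n^0$, control the type-II error by honesty via $\mathcal{M}_n(j_n)\subset\tilde{\Sigma}(r,\rho_n)$, split the type-I error according to whether $f_0\in C_n$, and bound the diameter term by Markov plus the adaptive rate at $\Sigma(s)$. The only (cosmetic) difference is that you lower-bound $|C_n|$ by $\Vert f_m\Vert_\infty = 2^{-j_nr}\Vert\psi\Vert_\infty$ directly, whereas the paper uses $d(\Sigma(s),\mathcal{M}_n(j_n))\geq 2^{-j_nr}(1-B2^{j_n(r-s)})$; both are of the same order $2^{-j_nr}$ and the argument is otherwise identical.
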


\begin{proof}

Assume otherwise. Take a subsequence of $\frac{\rho_n}{r_n(r)}$ which tends towards $0$, and still note $n$ the subscript. Take $j_n$ as in Lemma~\ref{triple}. We show first that $r(T_n^*,j_n) \to 0$ as $n \to \infty$. 
\begin{eqnarray*}
E_{f_0}(T_n^0) &=& P_{f_0}(C_n \cap \mathcal{M}(j_n) \neq \emptyset)
\\ &=& P_{f_0} \left( \frac{}{} C_n \cap \mathcal{M}_n(j_n) \neq \emptyset, f_0 \in C_n \right) + P_{f_0}\left( \frac{}{} C_n \cap \mathcal{M}_n(j_n) \neq \emptyset, f_0 \notin C_n \right)
\\ &\leq & P_{f_0}\left( \frac{}{} |C_n| \geq d(\Sigma(s),\mathcal{M}_n(j_n)) \right) + P(f_0 \notin C_n)
\\ &\leq & \frac{E_{f_0}(|C_n|)}{d(\Sigma(s),\mathcal{M}_n(j_n))} + \alpha + o(1)
\end{eqnarray*}
thanks to Markov's inequality and honesty of $C_n$.

So 
$$
E_{f_0} (T_n^0) \leq \frac{Lr_n(s)}{2^{-j_nr}(1-B} 2^{j_n(r-s)}) + \alpha + o(1) = \alpha + o(1)
$$
thanks to Lemma~\ref{triple} (second point) and adaptivity of $C_n$.

For $f_m \in \mathcal{M}_n(j_n)$, 

\begin{eqnarray*}
 E_{f_m}(1-T_n^0) &=& P_{f_m}(C_n \cap \mathcal{M}_n(j_n) = \emptyset)
\\ &\leq &  P_{f_m}(f_m \notin C_n)
\\ &\leq & \sup_{f \in \mathcal{P}_n} P_f(f \notin C_n)
\end{eqnarray*}
thanks to Lemma~\ref{triple} (first point). So

\begin{equation*}
\sup_{f_m \in \mathcal{M}_n(j_n)} E_{f_m}(T_n^0) \leq \sup_{f \in \mathcal{P}_n} P_f(f \notin C_n) \leq \alpha + o(1)
\end{equation*}

\noindent by honesty of $C_n$.
Therefore we have
\begin{equation*}
\limsup_{n \to \infty} \, r(T_n^0,j_n) \leq 2\alpha < 1
\end{equation*}

\noindent but

\begin{equation*}
\liminf_{n \to \infty} \inf_{T_n \in \mathcal{T}_n} r(T_n,j_n) \geq 1,
\end{equation*}

\noindent thanks to Lemma~\ref{triple} (third point) and Proposition~\ref{prop1}, which is a contradiction. 

\end{proof}

\subsection{Proof of upper bound}
\label{ub}
\subsubsection{Construction of $C_n$}

Let $\rho_n = \lambda r_n(r)$, with $\lambda$ chosen below.
We will estimate the function $f$ to construct confidence bands. We need two different estimators.
We use the local polynomial estimator of order $l$ with $h = \left( \frac{\log n}{n}\right)^{\frac{1}{2r+1}}$, $f_n(h)$, which we note $f_n$ in this section, defined in Theorem~\ref{lpe}.
Thanks to Equation~\ref{bias}, we know that for some $b>0$, for all $f \in \Sigma(r)$,
\begin{equation}
\label{dn}
\Vert E_f f_n - f \Vert_{\infty} \leq b r_n(r) 
\end{equation}
$\hat{f}_n$ is the estimator given by Theorem~\ref{thm5}. $\hat{f}_n$ is adaptive over $\Sigma(s) \cup \Sigma(r)$ with rate $r_n(\cdot)$. In other words, for $D = D(B,l)$,

\begin{equation}
\sup_{f \in \Sigma(t)} E_f \Vert \hat{f}_n - f \Vert_{\infty} \leq Dr_n(t) \hspace{1.0 cm} t = r,s. 
\end{equation}
We are now able to construct the confidence band $C_n$. The idea is to adapt the width of $C_n$ with the likelihood that $f$ be in $\Sigma(r)$ or in $\Sigma(s)$. Define $$d_n = d(f_n,\Sigma(s)).$$ 
If $d_n$ is greater than some constant times $r_n(r)$, Equation~\ref{dn} shows that it is unlikely that $f$ is in $\Sigma(s)$
(intuitively, $f_n \approx E_f f_n$), and thus we choose $\hat{f}_n \pm Lr_n(r)$. Elsewhere, we choose $\hat{f}_n \pm Lr_n(s)$. \newline
Accordingly, define $C_n$ :
\begin{equation}
\label{cn}
\hat{f}_n \pm Lr_n(r) \hspace{0.3 cm} \text{if } d_n > \tau \hspace{0.3 cm} \text{and} \hspace{0.3 cm} \hat{f} \pm Lr_n(s) \hspace{0.3 cm} \text{if } d_n \leq \tau,
\end{equation}
where $\tau = \kappa r_n(r)$ and where $\kappa$ and $L$ are constants chosen below. Let us now prove that $C_n$ is honest and adaptive.

\subsubsection{Proof of honesty and adaptivity}

We first prove that $C_n$ is an honest confidence band for $\mathcal{P}_n$. Let $0 < \alpha < 1$. If $f \in \Sigma(s)$ we have, using adaptivity of $\hat{f}_n$ and Markov's inequality,

\begin{eqnarray*}
\inf_{f \in \Sigma(s)} P_f(f \in C_n) &\geq & 1 - \sup_{f \in \Sigma(s)} P_f \left(\Vert \hat{f}_n - f \Vert_{\infty} > L r_n(s) \right) \\
 & \geq &  1 - \frac{1}{Lr_n(s)} \sup_{f \in \Sigma(s)} E_f \Vert \hat{f}_n - f \Vert_{\infty} \\
 & \geq & 1 - \frac{D}{L}
\end{eqnarray*}
which is greater than $1 - \alpha$ for $L$ large enough. 

Now, if $f \in \tilde{\Sigma} (r,\rho_n)$, we have, using again Markov's inequality,
$$
\inf_{f \in \tilde{\Sigma}(r,\rho_n)} P_f(f \in C_n) \geq 1 - \frac{\sup_{f \in \tilde{\Sigma}(r,\rho_n)} E_f \Vert \hat{f}_n - f \Vert_{\infty}}{Lr_n(r)} - \sup_{f \in \tilde{\Sigma}(r,\rho_n)} P_f(d_n \leq \tau)
$$
and the first substracted term is smaller than $\alpha/2$ for $L$ large enough. Now,  $P_f(d_n \leq \tau)$ equals, for every $f \in \tilde{\Sigma}(r,\rho_n)$,

\begin{eqnarray*}
P_f \left( \inf_{g \in \Sigma(s)} \Vert f_n - g \Vert_{\infty} \leq \kappa r_n(r) \right) 
& \leq & P_f \left( \frac{}{} \inf_g \Vert f-g \Vert_{\infty} - \Vert f_n - E_f f_n \Vert_{\infty} - \Vert E_f f_n - f \Vert_{\infty} \leq \kappa r_n(r) \right) \\
& \leq & P_f \left( \frac{}{} \rho_n - \Vert E_f f_n - f \Vert_{\infty} - \kappa r_n(r) \leq \Vert f_n - E_f f_n \Vert_{\infty} \right) \\
& \leq & P_f\left( \frac{}{} \Vert f_n - E_f f_n \Vert_{\infty} \geq (\lambda - \kappa - b)r_n(r) \right) \\
& \leq & c_1 n^{-\gamma_1} = o(1)
\end{eqnarray*}
thanks to Corollary~\ref{kerk}, by choosing $\lambda$ large enough. This completes the proof of honesty of the band. Let us now deal with adaptivity.

By definition of $C_n$ we have
$$
|C_n| \leq Lr_n(r)
$$
so the case $f \in \tilde{\Sigma}(r,\rho_n)$ is immediate. If $f \in \Sigma(s)$ then,

\begin{eqnarray*}
E_f |C_n| &\leq & Lr_n(r) P_f(d_n > \tau) + Lr_n(s) \\
& \leq & L r_n(r) P_f \left( \inf_{g \in \Sigma(s)} \Vert f_n - g \Vert_{\infty} > \kappa r_n(r) \right) + Lr_n(s) \\
& \leq & Lr_n(r) P_f \left( \frac{}{} \Vert f_n - f \Vert_{\infty} > \kappa r_n(r) \right) + Lr_n(s) \\
& \leq & Lr_n(r) P_f \left( \frac{}{} \Vert f_n - E_f f_n \Vert_{\infty} > (\kappa - b) r_n(r) \right) + Lr_n(s) \\
& \leq & Lr_n(r)c_2 n^{- \gamma_2} + Lr_n(s) = O(r_n(s))
\end{eqnarray*}
thanks to Corollary~\ref{kerk}, since $\gamma_2$ is sufficiently large if $\kappa$ is chosen large enough.

\section{Auxiliary results}

The first section proves Proposition~\ref{prop1}. It is the adaptation of the proof in Lepski and Tsybakov~\cite{Lepski}. Lemma~\ref{stand} is a standard procedure to
lower bound the error ; Lemma~\ref{lemxi} and Lemma~\ref{lemalpha} deal with the functions
in $\mathcal{M}_n(j_n)$, and Lemma~\ref{end} uses them all to give the required bound, thanks to Bahr-Esseen's inequality~\cite{BE}.
The second section presents the estimators used to contruct the confidence bands and to prove the existence of adaptive estimators.
The third section deals with the concentration inequality used in proving the bands are indeed honest and adaptive, and that the estimator we construct
is indeed adaptive.

\subsection{Proof of Proposition~\ref{prop1}}
\label{proofprop1}
Throughout this section, $T_n$ is any test, $(j_n)_{n \geq 0}$ is a non negative real sequence such that $2^{-j_nr} = o(2^{-j_n^*r})$ as $n \rightarrow \infty$, and $M = c_0(2^{j_n} - 1)$. Recall the definition of
$\mathcal{M}_n(j_n)$ and that $f_0 = 0$.

\begin{lemma}
\label{stand}
For all $0 < \eta < 1$,
$$r(T_n,j_n) \geq (1 - \eta) P\left(\frac{1}{M} \sum_{m = 1}^M \xi_m \geq 1 - \eta \right),$$
where $\xi_m = \frac{dP_m}{dP_0}(\varepsilon_1, ..., \varepsilon_n)$ and $P_m = P_{f_m}$.
\end{lemma}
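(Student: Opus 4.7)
The plan is to execute the standard Bayes reduction: replace the supremum in $r(T_n,j_n)$ by an average over $\mathcal{M}_n(j_n)$, rewrite the error of second type under $P_0$ via the likelihood ratios $\xi_m$, and then exploit the convexity of the test error on the event that the averaged likelihood ratio is not too small.

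Concretely, I would first observe that
\begin{equation*}
r(T_n,j_n) = E_{f_0}(T_n) + \sup_{1\leq m \leq M} E_{f_m}(1-T_n) \;\geq\; E_0(T_n) + \frac{1}{M}\sum_{m=1}^M E_{f_m}(1-T_n),
\end{equation*}
since the supremum of a finite family dominates its arithmetic mean. Next I would pass each expectation under $P_m$ to an expectation under $P_0$ via the change of measure $dP_m/dP_0=\xi_m$: since $1-T_n$ is $\sigma(Y_1,\dots,Y_n)$-measurable, this is the usual Radon-Nikodym identity, yielding $E_{f_m}(1-T_n)=E_0[(1-T_n)\xi_m]$. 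Summing, and writing $\bar\xi=\frac{1}{M}\sum_{m=1}^M \xi_m$, this gives
\begin{equation*}
r(T_n,j_n) \;\geq\; E_0\!\left[T_n + (1-T_n)\bar\xi\right].
\end{equation*}

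To finish, I would use the indicator of the event $A_\eta=\{\bar\xi \geq 1-\eta\}$: on $A_\eta$ one has
\begin{equation*}
T_n+(1-T_n)\bar\xi \;\geq\; T_n + (1-T_n)(1-\eta) \;=\; (1-\eta) + \eta T_n \;\geq\; 1-\eta,
\end{equation*}
since $T_n\in\{0,1\}$ and $\eta\geq 0$. Dropping the contribution of $A_\eta^c$ (where the integrand is nonnegative), we obtain
\begin{equation*}
E_0\!\left[T_n + (1-T_n)\bar\xi\right] \;\geq\; (1-\eta)\, P_0(A_\eta) \;=\; (1-\eta)\, P\!\left(\frac{1}{M}\sum_{m=1}^M \xi_m \geq 1-\eta\right),
\end{equation*}
where the last equality is just the translation into the statement's probability, using that the $\xi_m$ are functions of the noise $(\varepsilon_1,\dots,\varepsilon_n)$ which has the same distribution under $P_0$ as the abstract reference measure $P$.

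There is no real obstacle in this lemma; the only small care concerns steps that may look automatic: justifying the change of measure (the $f_m$ are bounded and the $\xi_m$ are genuine Gaussian likelihood ratios, so $E_0[\xi_m]=1$ and the identity $E_{f_m}[h(Y)]=E_0[h(Y)\xi_m]$ holds for every bounded measurable $h$), and the trivial but crucial algebraic inequality on $A_\eta$. The rest is bookkeeping.
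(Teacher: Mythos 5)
Your proposal is correct and follows essentially the same route as the paper's own proof: the Bayes reduction replacing the supremum by the average over $\mathcal{M}_n(j_n)$, the change of measure writing the type II errors under $P_0$ via $\xi_m$, and the lower bound $T_n + (1-T_n)\bar\xi \geq (1-\eta)\mathbf{1}_{\{\bar\xi\geq 1-\eta\}}$ (the paper argues by cases on $T_n\in\{0,1\}$ where you write $(1-\eta)+\eta T_n$, but the content is identical). Your closing remark correctly identifies the small translation step from $P_0$ acting on $(Y_1,\dots,Y_n)$ to $P$ acting on $(\varepsilon_1,\dots,\varepsilon_n)$, which is valid since $f_0\equiv 0$.
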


\begin{proof}
\begin{eqnarray*}
r(T_n,j_n) &=& E_{f_0}(T_n) + \sup_{f \in \mathcal{M}_n(j_n)} E_{f}(1-T_n)
\\ &\geq & E_{f_0}(T_n) + \frac{1}{M} \sum_{m=1}^M E_{f_m}(1-T_n)
\\ &\geq &  E_{f_0}(T_n) + E_{f_0}((1-T_n)Z_n)
\end{eqnarray*}
where $$Z_n = \frac{1}{M} \sum_{m = 1}^M \frac{dP_m}{dP_0}(Y_1, ..., Y_n).$$
Let $0 < \eta < 1$.

\begin{eqnarray*}
\text{If } T_n = 1 &\text{then}& T_n + (1-T_n)Z_n = 1 \geq 1-\eta \\
&\text{else}& T_n + (1-T_n)Z_n = Z_n. 
\end{eqnarray*}
So,

\begin{eqnarray*}
T_n + (1-T_n) Z_n & \geq & (T_n + (1-T_n)Z_n)\boldmath{1}_{Z_n \geq 1-\eta} \\
                  & \geq & (1-\eta)\boldmath{1}_{Z_n \geq 1-\eta}. \\
\end{eqnarray*}
 As a result,
 
\begin{eqnarray*}
r_n(T_n,j_n) & \geq & (1-\eta)P_{f_0}(Z_n \geq 1-\eta)
\\ &=& (1- \eta)P\left(\frac{1}{M} \sum_{m = 1}^M \frac{dP_m}{dP_0}(\varepsilon_1, ..., \varepsilon_n) \geq 1 - \eta\right).
\end{eqnarray*}
\end{proof}

We want to show that $P\left(\frac{1}{M} \sum_{m = 1}^M \xi_m \geq 1 - \eta\right) \rightarrow 1$ as $n \rightarrow \infty$. It will be sufficient to conclude
because this quantity is independent of $T_n$. To do this, note that
$$P\left(\frac{1}{M} \sum_{m = 1}^M \xi_m \geq 1 - \eta \right) = 1 - P\left(\frac{1}{M} \sum_{m = 1}^M \xi_m < 1 - \eta \right).$$
Now, 
\begin{eqnarray*}
P\left(\frac{1}{M} \sum_{m = 1}^M \xi_m < 1 - \eta \right) &=& P\left(\frac{1}{M} \sum_{m = 1}^M \xi_m -1 < -\eta \right) \\
      &\leq& P\left( \left| \frac{1}{M} \sum_{m = 1}^M \xi_m - 1 \right| > \eta \right) \\
\end{eqnarray*}
and we therefore want to prove that the last quantity tends towards $0$.

\begin{lemma}
\label{lemxi}
For $1 \leq m \leq M$, $$\xi_m = \exp\left(\frac{\alpha_m}{\sigma}\zeta_m - \frac{2\sigma^2}{\alpha_m^2}\right),$$
with $\zeta_m \sim \mathcal{N}(0,1)$ i.i.d. and $$\alpha_m^2 = \sum_{i=1}^n (f_m(x_i))^2.$$
(As for the $f_m$'s, we do not make explicit the dependence on $n$ of $\alpha_m$.)
\end{lemma}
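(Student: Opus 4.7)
The plan is a direct computation from the Gaussian densities. Under $P_0$ the observations reduce to the noise $\varepsilon_i \sim \mathcal{N}(0,\sigma^2)$, whereas under $P_m$ the $Y_i$ are independent $\mathcal{N}(f_m(x_i), \sigma^2)$ variables. The Radon--Nikodym derivative of $P_m$ with respect to $P_0$ therefore factorises over $i$, and each factor is a ratio of Gaussian densities with common variance but shifted mean, which collapses to the exponential of a linear-minus-quadratic expression in the observations.

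Writing this out and evaluating at $Y_i = \varepsilon_i$ gives
\begin{equation*}
\xi_m \;=\; \exp\!\left( \frac{1}{\sigma^2}\sum_{i=1}^n \varepsilon_i f_m(x_i) \;-\; \frac{1}{2\sigma^2}\sum_{i=1}^n f_m(x_i)^2 \right).
\end{equation*}
With $\alpha_m^2 = \sum_i f_m(x_i)^2$, setting $\zeta_m := (\sigma \alpha_m)^{-1}\sum_i \varepsilon_i f_m(x_i)$ rewrites the exponent in the announced shape. As a linear combination of i.i.d.\ centred Gaussians, each $\zeta_m$ is Gaussian with mean $0$, and the chosen normalisation is precisely what is needed to make $\mathrm{Var}(\zeta_m) = 1$.

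The one point requiring care is independence of the $\zeta_m$. Because $(\zeta_1, \dots, \zeta_M)$ is a jointly Gaussian vector, it suffices to verify pairwise uncorrelatedness, and here the construction of $\mathcal{M}_n(j_n)$ pays off. The translates $\psi_{j_n, c_0^{-1} m}$ chosen at the start of Section~\ref{lb} were built to have pairwise disjoint supports, so $f_m(x_i) f_{m'}(x_i) = 0$ at every design point whenever $m \neq m'$, and $\mathrm{Cov}(\zeta_m, \zeta_{m'})$ collapses to a sum of zeros. No serious obstacle appears: the argument is essentially bookkeeping on the explicit form of the Gaussian likelihood, and the disjoint-support design of the test functions is what converts orthogonality at the design points into genuine probabilistic independence.
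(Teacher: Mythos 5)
Your proof is correct and follows essentially the same route as the paper's: compute the Gaussian likelihood ratio, which factorises over $i$ into an exponential of a linear-minus-quadratic expression in the $\varepsilon_i$, define $\zeta_m$ by the natural normalisation so that it is standard normal, and deduce independence of the $\zeta_m$ from the disjoint supports of the $f_m$ via the jointly-Gaussian-vector argument. One remark worth making explicit: your intermediate formula correctly gives the exponent $\frac{\alpha_m}{\sigma}\zeta_m - \frac{\alpha_m^2}{2\sigma^2}$, whereas the lemma's display (and the paper's own final line) reads $\frac{\alpha_m}{\sigma}\zeta_m - \frac{2\sigma^2}{\alpha_m^2}$ --- the second term there is an inverted-fraction typo, so when you say you ``rewrite the exponent in the announced shape'' you are in fact silently correcting the statement, and it would be better to flag that.
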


\begin{proof}
\begin{eqnarray*}
\xi_m &=& \prod_{i=1}^n \exp\left(-\frac{(\varepsilon_i - f_m(x_i))^2 - \varepsilon_i^2}{2\sigma^2}\right)
\\ &=& \prod_{i=1}^n \exp\left(\frac{f_m(x_i)(2\varepsilon_i - f_m(x_i))}{2\sigma^2}\right)
\\ &=& \exp\left(\frac{1}{2\sigma^2} \sum_{i=1}^n f_m(x_i)(2\varepsilon_i - f_m(x_i))\right)
\\ &=& \exp\left(\frac{1}{\sigma^2} \sum_{i=1}^n f_m(x_i)\varepsilon_i - \frac{1}{2\sigma^2} \sum_{i=1}^n (f_m(x_i))^2\right). 
\end{eqnarray*}
Now, define $$\zeta_m = \frac{1}{\alpha_m \sigma} \sum_{i=1}^n f_m(x_i)\varepsilon_i.$$
Therefore, $$\xi_m = \exp(\frac{\alpha_m}{\sigma}\zeta_m - \frac{2\sigma^2}{\alpha_m^2}).$$
$\zeta_m$ is a gaussian random variable as a linear combination of independent gaussian random variables, and
straightforward computations show that $\zeta_m \sim \mathcal{N}(0,1)$.

To prove that the $\zeta_m$'s are independent, we first note that the vector $\zeta_1, ..., \zeta_M$ is a gaussian vector. Indeed, any linear combination
of its coordinates is a linear combination of the $\varepsilon_i$'s and consequently a gaussian random variable as above. Thus, it is sufficient to prove
that the covariances all equal $0$. \newline
But, for $m \neq m'$, $$Cov(\zeta_m,\zeta_{m'}) = (\sigma^2 \alpha_m \alpha_{m'})^{-1}\sum_{i=1}^n f_m(x_i)f_{m'}(x_i) = 0$$ since the $f_m$'s supports are not overlapping. 
\end{proof}

\begin{lemma}
\label{lemalpha}
For all $1 \leq m \leq M$, $$\alpha_m^2 = n2^{-j_n(2r+1)} + O(2^{j_n(1-2r)})$$
with the  last term independent of $m$.
\end{lemma}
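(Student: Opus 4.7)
The plan is to compute $\alpha_m^2$ by recognizing it as a Riemann sum and controlling the discretization error.

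First I would unpack the definitions. Since $f_m = 2^{-j_n(r+1/2)} \psi_{j_n, c_0^{-1}m}$ and $\psi_{jm}(x) = 2^{j/2} \psi(2^j x - m)$, we have
$$f_m(x)^2 = 2^{-2j_n r}\, \psi^2\!\left(2^{j_n} x - c_0^{-1}m\right),$$
so $\alpha_m^2 = 2^{-2j_n r} S_m$ with $S_m = \sum_{i=1}^n \psi^2(2^{j_n} i/n - c_0^{-1}m)$. The game is now to show $S_m = n\, 2^{-j_n} + O(1)$ uniformly in $m$.

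Next I would identify the continuous counterpart. For $j_n$ large enough that the support of $\psi^2(2^{j_n}\cdot - c_0^{-1}m)$ sits inside $[0,1]$ (which is precisely the regime the section already works in, by choice of $c_0$), the substitution $u = 2^{j_n} x - c_0^{-1}m$ gives
$$\int_0^1 \psi^2(2^{j_n} x - c_0^{-1}m)\, dx = 2^{-j_n} \int_{\mathbb{R}} \psi^2 = 2^{-j_n},$$
the last equality because the Daubechies $\psi$ is $L^2$-normalised.

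To pass from the integral to the Riemann sum I would invoke Koksma's inequality: for any function $g$ of bounded variation on $[0,1]$,
$$\Bigl| \sum_{i=1}^n g(i/n) - n \int_0^1 g \Bigr| \leq V_{[0,1]}(g).$$
Applied to $g_m(x) = \psi^2(2^{j_n} x - c_0^{-1}m)$, the total variation equals $V_{\mathbb{R}}(\psi^2)$, since total variation is invariant under monotone affine reparametrisation of the argument and the support is contained in $[0,1]$. The constant $V_{\mathbb{R}}(\psi^2)$ depends only on $\psi$ (finite because the Daubechies wavelet is compactly supported and sufficiently smooth, so $\psi^2$ is Lipschitz with compact support), hence the error $O(1)$ is independent of $m$. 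Combining, $\alpha_m^2 = 2^{-2j_n r}(n\, 2^{-j_n} + O(1)) = n\, 2^{-j_n(2r+1)} + O(2^{-2j_n r})$, and since $2^{-2j_n r} \leq 2^{j_n(1-2r)}$, the stated bound follows.

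The one nontrivial point is the uniformity of the Koksma bound in $m$; once one observes that the BV norm of $\psi^2$ is intrinsic and that translation/dilation of the argument does not change it, the calculation is routine. I would expect no serious obstacle beyond checking that $j_n$ is large enough for the rescaled support to lie in $[0,1]$, which is the same hypothesis already used in defining $\mathcal{M}_n(j_n)$.
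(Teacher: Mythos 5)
Your proof is correct and takes a genuinely different route from the paper's. The paper writes $\alpha_m^2 = 2^{-j_n(2r+1)}\sum_i 2^{j_n}\psi^2(2^{j_n}x_i - m)$ and compares the Riemann sum $\frac{1}{n}\sum_i 2^{j_n}\psi^2(2^{j_n}x_i-m)$ to $\int_{\mathbb{R}}\psi^2 = 1$ directly: it splits the difference over each subinterval $[\tfrac{i-1}{n},\tfrac{i}{n}]$, factors $\psi^2(\cdot)-\psi^2(\cdot)$, and applies the mean value theorem with $\|\psi'\|_\infty$ to get a per-interval bound, summing over all $n$ intervals to obtain a discretization error $O(2^{2j_n}/n)$, hence $\alpha_m^2 = n2^{-j_n(2r+1)} + O(2^{j_n(1-2r)})$. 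You instead invoke Koksma's inequality with $V(\psi^2)<\infty$, giving $S_m = n2^{-j_n} + O(1)$ and $\alpha_m^2 = n2^{-j_n(2r+1)} + O(2^{-2j_nr})$, a bound that is tighter by a factor $2^{-j_n}$ because the BV constant automatically respects the support localization of $\psi$ (the paper's MVT estimate sums a worst-case bound over all $n$ cells, most of which vanish). Your argument is also formally weaker in its regularity requirement (BV vs.\ $C^1$), though you over-justify this by asserting $\psi^2$ is Lipschitz, which holds only for Daubechies wavelets of sufficiently high order --- but the paper implicitly assumes at least as much by using $\|\psi'\|_\infty$, so neither proof is hurt. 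Both approaches give the uniformity in $m$ in the same way (the bound depends only on intrinsic properties of $\psi$), and both comfortably satisfy the stated $O(2^{j_n(1-2r)})$.
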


\begin{proof}
We drop the $n$ subscript in $j_n$ so as to simplify notations. Recall $supp(\psi_{jm}) \subset [0,1]$. \newline
First,
\begin{eqnarray*}
\frac{1}{n} \sum_{i=1}^n 2^j \psi^2(2^jx_i-m) - \int_{\mathbb{R}}\psi^2(t) \, dt &=& \frac{1}{n} \sum_{i=1}^n 2^j \psi^2(2^jx_i-m) - 2^j\int_{[0,1]}\psi^2(2^jt-m) \, dt
\\ &=& 2^j\sum_{i=1}^n \int_{\frac{i-1}{n}}^{\frac{i}{n}}\left(\psi^2(2^jx_i-m) - \psi^2(2^jt-m)\right) \, dt
\\ &=& 2^j\sum_{i=1}^n \int_{\frac{i-1}{n}}^{\frac{i}{n}} \left( \frac{}{} \psi(2^jx_i-m) - \psi(2^jt-m) \right) \\  & & \hspace{3.0 cm} \left( \frac{}{} \psi(2^jx_i-m) + \psi(2^jt-m) \right) \, dt
\\ &\leq & 2^{2j} \frac{||\psi'||_{\infty}}{n}2||\psi||_{\infty} = K \frac{2^{2j}}{n}.
\end{eqnarray*} \newline

Here we use the mean value theorem and the fact that for all $i \in [1,n]$ and $t \in [\frac{i-1}{n},\frac{i}{n}]$, $\vert t - x_i \vert \leq \frac{1}{n}$.

Therefore, $$\sum_{i=1}^n 2^j \psi^2(2^jx_i-m) = n \int_{\mathbb{R}}\psi^2(t) \, dt + O(2^{2j})$$
and recall $\int_{\mathbb{R}}\psi^2(t) \, dt=1$. \newline

Now,
\begin{eqnarray*} 
\alpha_m^2 &=& 2^{-j(2r+1)} \sum_{i=1}^n \psi_{jm}^2(x_i)
\\ &=& 2^{-j(2r+1)} \sum_{i=1}^n 2^j \psi^2(2^jx_i-m)
\\ &=& 2^{-j(2r+1)} \left(n + O\left(2^{2j}\right) \right)
\\ &=& n2^{-j(2r+1)} + O\left(2^{j(1-2r)}\right).
\end{eqnarray*}

\end{proof}

\begin{lemma}
\label{end}
Let $0 < v < 1$.
$$P\left(\left|\frac{1}{M} \sum_{m = 1}^M \xi_m - 1\right| > \eta\right) \leq c_1 \frac{\exp(c_2 n2^{-j_n(2r+1)})}{(2^{j_n}-1)^v}$$
where $c_1$ and $c_2$ are positive constants.
\end{lemma}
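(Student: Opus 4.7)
The plan is to apply the Bahr--Esseen inequality \cite{BE} to the centered, independent summands $X_m := \xi_m - 1$.

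First, I would verify the hypotheses. Since each $\xi_m$ is the likelihood ratio $dP_m/dP_0$, $E_{f_0}\xi_m = 1$, so the $X_m$ are centered. By Lemma~\ref{lemxi} each $\xi_m$ is a measurable function of $\zeta_m$ alone, and the $\zeta_m$ are independent standard Gaussians, so the $X_m$ are independent.

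Second, I would fix an exponent $p = 1 + v \in (1, 2]$ and combine Markov's inequality with Bahr--Esseen:
\begin{equation*}
P\left(\left|\frac{1}{M}\sum_{m=1}^M X_m\right| > \eta\right) \leq \frac{1}{\eta^p M^p} E\left|\sum_{m=1}^M X_m\right|^p \leq \frac{2}{\eta^p M^{p-1}} \max_{1 \leq m \leq M} E|X_m|^p.
\end{equation*}

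Third, I would estimate $E|X_m|^p \leq 2^{p-1}(E\xi_m^p + 1)$ and compute $E\xi_m^p$ via the Gaussian moment generating function. Using the expression $\xi_m = \exp\!\left(\alpha_m\zeta_m/\sigma - \alpha_m^2/(2\sigma^2)\right)$ with $\zeta_m \sim \mathcal{N}(0,1)$ from Lemma~\ref{lemxi},
\begin{equation*}
E\xi_m^p = \exp\left(\frac{p(p-1)\alpha_m^2}{2\sigma^2}\right).
\end{equation*}
Lemma~\ref{lemalpha} then yields a uniform bound of the form $\alpha_m^2 \leq C_0\, n 2^{-j_n(2r+1)}$, whence $E\xi_m^p \leq \exp(c_2' n 2^{-j_n(2r+1)})$ for a suitable $c_2' > 0$. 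Setting $v = p-1$ and recalling $M = c_0(2^{j_n}-1)$, the claimed inequality follows after absorbing all universal constants into $c_1$.

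The main obstacle lies in the third step: verifying that the remainder $O(2^{j_n(1-2r)})$ from Lemma~\ref{lemalpha} is absorbed into $n 2^{-j_n(2r+1)}$ up to a multiplicative constant, which reduces to checking that $2^{2j_n} \lesssim n$. This holds for sequences $j_n$ at or slightly above $j_n^*$ --- the range relevant for the subsequent application in Proposition~\ref{prop1}. Beyond this verification, the argument is routine: an explicit log-Gaussian m.g.f.\ computation combined with the power-$p$ gain from Bahr--Esseen.
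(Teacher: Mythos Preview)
Your approach is essentially the paper's: Markov's inequality with exponent $p=1+v$, then Bahr--Esseen on the centered independent summands $\xi_m-1$, then the log-Gaussian moment computation $E\xi_m^{p}=\exp\!\bigl(p(p-1)\alpha_m^2/(2\sigma^2)\bigr)$ combined with Lemma~\ref{lemalpha}. All of this matches.

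The one point where you diverge is the treatment of the remainder $O(2^{j_n(1-2r)})$ from Lemma~\ref{lemalpha}, and here your proposed fix does not work. You want to absorb it multiplicatively into the main term $n2^{-j_n(2r+1)}$, which forces $2^{2j_n}\lesssim n$. But Proposition~\ref{prop1} only assumes $2^{-j_nr}=o(2^{-j_n^*r})$, i.e.\ $j_n$ grows \emph{faster} than $j_n^*$, with no upper bound; for such $j_n$ the inequality $2^{2j_n}\lesssim n$ can fail badly. The paper handles this differently and more simply: since the lower-bound part of Theorem~\ref{thm} carries the standing assumption $r>1/2$, the exponent $1-2r$ is negative, so $2^{j_n(1-2r)}\to 0$ and $\exp\!\bigl(O(2^{j_n(1-2r)})\bigr)$ is bounded by a universal constant $K'$ independent of $m$ and $n$. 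Replace your absorption argument by this observation and the proof goes through for all admissible $j_n$.
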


\begin{proof}
First note that for all $1 \leq m \leq M$, $E(\xi_m) = 1$, so we have, by Markov's inequality 

\begin{eqnarray*}
P\left(\left|\frac{1}{M} \sum_{m = 1}^M \xi_m - 1\right| > \eta \right) &=& P\left(\left|\frac{1}{M} \sum_{m = 1}^M (\xi_m - E(\xi_m))\right| > \eta \right) \\ 
                &=& P\left(\left|\sum_{m = 1}^M (\xi_m - E(\xi_m))\right| > M\eta\right) \\
                &=& P\left(\left|\sum_{m = 1}^M (\xi_m - E(\xi_m))\right|^{1+v} > (M\eta)^{1+v}\right) \\
                & \leq & \frac{E\left(\left|\sum_{m=1}^M(\xi_m - E(\xi_m))\right|^{1+v}\right)}{(M\eta)^{1+v}}. \\
\end{eqnarray*}
Now, Bahr and Esseen recall that, for $1 \leq r \leq 2$ and $x,y$ complex numbers,
$ \vert x+y \vert^r + \vert x-y \vert^r \leq 2(\vert x \vert^r + \vert y \vert^r)$. This, and the inequality which bears their names give
\begin{eqnarray*}
E\left(\left|\sum_{m=1}^M(\xi_m - E(\xi_m))\right|^{1+v}\right) & \leq & \frac{K}{2}\sum_{m=1}^M E\left(\left|\xi_m-E(\xi_m)\right|^{1+v}\right) \\
             & \leq & K\sum_{m=1}^M E\left(|\xi_m|^{1+v} + 1 \right) \\
& \leq & K \left( \sum_{m=1}^M E\left(|\xi_m|^{1+v} \right) + M \right) 
\end{eqnarray*}
which leads us to

\begin{equation}
\label{eqbe}
P\left(\left|\frac{1}{M} \sum_{m = 1}^M \xi_m - 1\right| > \eta\right) \leq K \left( \frac{\sum_{m=1}^M E\left(|\xi_m|^{1+v}\right)}{(M\eta)^{1+v}} + \frac{1}{M^v\eta^{1+v}} \right).
\end{equation}

But, thanks to Lemma~\ref{lemalpha}, we have 
\begin{eqnarray*}
 E\left(|\xi_m|^{1+v}\right) &=& \exp\left(\frac{\alpha_m^2}{2 \sigma^2}(v+v^2)\right) \\
                  &=& \exp\left(\frac{1}{2 \sigma^2}(v+v^2)\left(n2^{-j_n(2r+1)} + O\left(2^{j_n(1-2r)}\right)\right)\right) \\
                  &=& \exp\left(\frac{1}{2 \sigma^2}(v+v^2)\left(n2^{-j_n(2r+1)}\right)\right) \exp\left(O\left(2^{j_n(1-2r)}\right)\right),\\
\end{eqnarray*}
with $\exp(O(2^{j_n(1-2r)})$ bounded independently of $m$ thanks to the same lemma by a constant $K'$, because $r>1/2$. \newline

Now, Equation~\ref{eqbe} leads to 
\begin{eqnarray*}
P\left(\left|\frac{1}{M} \sum_{m = 1}^M \xi_m - 1\right| > \eta\right) & \leq & K \left( \frac{MK'\exp\left(\frac{1}{2 \sigma^2}(v+v^2)\left(n2^{-j_n(2r+1)}\right)\right)}{(M\eta)^{1+v}} + \frac{1}{M^v\eta^{1+v}} \right)\\
&\leq & K \left( \frac{K'\exp\left(\frac{1}{2 \sigma^2}(v+v^2)\left(n2^{-j_n(2r+1)}\right)\right)}{M^v\eta^{1+v}} + \frac{1}{M^v\eta^{1+v}} \right)\\          
& \leq & K \max(K',1) \frac{\exp\left(\frac{1}{2 \sigma^2}(v+v^2)\left(n2^{-j_n(2r+1)}\right)\right)}{c_0^v(2^{j_n}-1)^v\eta^{1+v}} \\
\end{eqnarray*}
since $1 \leq \exp \left(\frac{1}{2 \sigma^2}(v+v^2)\left(n2^{-j_n(2r+1)} \right) \right)$.
\end{proof}

As a consequence, we just have to plug in the ``value'' of $j_n$ in
$$P\left(\left|\frac{1}{M} \sum_{m = 1}^M \xi_m - 1\right| > \eta\right) \leq c_1 \frac{\exp\left(c_2 n2^{-j_n(2r+1)}\right)}{(2^{j_n}-1)^v}.$$
Now, $2^{-j_nr} = o(2^{-j_n^*r})$ as $n \rightarrow \infty$.
But $2^{-j_n^*r} \simeq (\frac{\log n}{n})^{\frac{r}{2r+1}}$ so $2^{-j_n(2r+1)} = o(\frac{\log n}{n})$ as $n \rightarrow \infty$ and
therefore the argument of the exponential is negligible in front of $\log n$. Since $2^{j_n} \geq (\frac{n}{\log n})^{\frac{1}{2r+1}}$, the
ratio tends towards $0$ as $n \rightarrow \infty$. The discussion before Lemma~\ref{lemxi} shows this completes the proof of Proposition~\ref{prop1}.

\subsection{Estimators}

\subsubsection{Local polynomial estimator}

We fix an integer $l \geq s$. $t$ is any number such that $0 < t \leq l$. In this section, $E$ stands for $E_f$. 
Reading through the proofs of Proposition~1.13 and Theorem 1.8 in Tsybakov \cite{Tsyba}, we get the following.

\begin{theorem}
\label{lpe}
There exist $n_0 \in \mathbb{N}$, $c_1 > 0$ such that,
for all $n \geq n_0$, for all $\left(\log n/n \right)^{1/(2l+1)} \geq h \geq \log n/n$,
there exist functions $x \in [0,1] \mapsto W_{ni}(h,x)$, $1 \leq i \leq n$ verifying :
\begin{itemize}
\item $\sup_{i,x} |W_{ni}(x)| \leq \frac{c_1}{nh}$
\item $\sum_{i=1}^{n} |W_{ni}(x)| \leq c_1$
\item $W_{ni}(x) = 0 \hspace{3 mm} \text{if} \hspace{3 mm} |x_i - x| > h$
\item $x \mapsto W_{ni}(x)$ is continuous.
\end{itemize}
There exist further $c_2 > 0$ such that,
for all $0 < t \leq l$, for all $f \in \Sigma(t)$, the local polynomial estimator of order $l$,
\begin{equation}
\label{fn*}
f_n(h)(x) = \sum_{i=1}^{n} W_{ni}(h,x) Y_i = \sum_{i=1}^{n} W_{ni}(x) Y_i
\end{equation}
satisfies :
\begin{eqnarray}
\label{bias}
\Vert E f_n(h) - f \Vert_{\infty} \leq \frac{c_1\Vert f \Vert_{t,\infty}}{l!} h^t = B(h,f) \\
\label{variance}
E \Vert f_n(h) - E f_n(h) \Vert_{\infty}^2 \leq c_2^2 \frac{\log n}{nh} = c_2^2\sigma^2(h,n).
\end{eqnarray}
\end{theorem}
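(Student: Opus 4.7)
The plan is to follow the construction of Tsybakov's local polynomial estimator, adapting it to the fixed-design regression setting. I would start by fixing an auxiliary kernel $K:\mathbb{R}\to\mathbb{R}$ that is Lipschitz, compactly supported in $[-1,1]$ and strictly positive on a non-degenerate subinterval. The estimator $f_n(h)(x)=\hat a_0(x)$ is defined as the first coordinate of the weighted least-squares solution minimising $\sum_{i=1}^n \bigl(Y_i-\sum_{k=0}^l a_k((x_i-x)/h)^k\bigr)^2 K((x_i-x)/h)$. Writing $U(t)=(1,t,\dots,t^l)^T$ and $B_{nx}=(nh)^{-1}\sum_i U((x_i-x)/h)U((x_i-x)/h)^T K((x_i-x)/h)$, solving the normal equations yields the closed form $W_{ni}(h,x)=(nh)^{-1}\,e_1^T B_{nx}^{-1} U((x_i-x)/h)\,K((x_i-x)/h)$.

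The first step is to verify the four structural properties of the weights. The support condition and continuity in $x$ are immediate from the corresponding properties of $K$. The two quantitative bounds reduce to a uniform control of $\|B_{nx}^{-1}\|$: I would argue that, since $h\geq \log n/n$ ensures at least of order $\log n$ design points fall in each window, a Riemann-sum comparison shows $B_{nx}$ converges uniformly in $x$ to a positive-definite matrix, with boundary cases $x$ near $0$ or $1$ handled by choosing $K$ with sufficiently large support so that $B_{nx}$ remains invertible. From $\|B_{nx}^{-1}\|\leq c$ the bound $\sup_{i,x}|W_{ni}(x)|\leq c_1/(nh)$ follows directly, and $\sum_i |W_{ni}(x)|\leq c_1$ follows since at most $O(nh)$ indices contribute to the sum.

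The bias estimate I would derive by exploiting polynomial reproducibility: the construction guarantees that $\sum_i W_{ni}(x)Q(x_i)=Q(x)$ for every polynomial $Q$ of degree at most $l$. Given $f\in\Sigma(t)$ with $t\leq l$, expand $f$ around $x$ by its Taylor polynomial $T_x$ of degree $\lfloor t\rfloor$, whose coefficients are controlled by $\|f\|_{t,\infty}$, and use the Hölder regularity of the $\lfloor t\rfloor$-th derivative to bound $|f(x_i)-T_x(x_i)|\leq c\,\|f\|_{t,\infty}|x_i-x|^t$ on the support $\{|x_i-x|\leq h\}$. Combining reproducibility with the sum bound on $|W_{ni}(x)|$ produces the announced bound $c_1\|f\|_{t,\infty}h^t/l!$.

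The main obstacle will be the sup-norm variance bound. Pointwise, $f_n(h)(x)-Ef_n(h)(x)=\sum_i W_{ni}(x)\varepsilon_i$ is Gaussian with variance $\sigma^2\sum_i W_{ni}(x)^2\leq c\,\sigma^2/(nh)$, obtained by pairing the $\sup$ bound and the sum bound. To move to the supremum over $x\in[0,1]$, I would discretise $[0,1]$ on a grid of polynomial size $N=n^A$, apply the standard Gaussian maximal inequality (contributing a factor $\log N\asymp \log n$), and then control fluctuations between grid points through the Lipschitz continuity of $x\mapsto W_{ni}(h,x)$, whose Lipschitz constant can be bounded by something polynomial in $1/h$ and $n$. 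Choosing $A$ large enough makes the interpolation error negligible and yields $E\|f_n(h)-Ef_n(h)\|_\infty^2\leq c_2^2 \log n/(nh)$. The technical care lies in keeping all constants uniform across the admissible range of $h$ and across the boundary behaviour of $B_{nx}^{-1}$.
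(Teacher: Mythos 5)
Your proposal reconstructs Tsybakov's local polynomial estimator and the standard arguments around it (polynomial reproduction for the bias, Riemann-sum invertibility of $B_{nx}$ for the weight bounds, discretisation plus a Gaussian maximal inequality for the sup-norm variance), which is precisely the route the paper takes: it simply cites Proposition~1.13 and Theorem~1.8 of Tsybakov rather than writing out the details. One small slip to fix: near the boundary you should \emph{not} enlarge the support of $K$, since that would violate the required support property $W_{ni}(x)=0$ for $|x_i-x|>h$; invertibility of $B_{nx}$ there follows instead because the truncated window $[0,x+h]$ (or $[x-h,1]$) still has length at least $h$, hence contains of order $nh\gtrsim\log n$ design points, so the limiting matrix $\int U(u)U(u)^{T}K(u)\,du$ taken over a half-window remains positive definite as long as $K$ is strictly positive on a non-degenerate subinterval.
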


For $h = h_n(t) = \left( \frac{\log n}{n}\right)^{\frac{1}{2t+1}}$, this implies that $f_n(h)$ satisfies

\begin{equation*}
\sup_{f \in \Sigma(t)} E_f\Vert f_n(h_n(t)) - f \Vert_{\infty} \leq D \left( \frac{\log n}{n}\right)^{\frac{t}{2t+1}},
\end{equation*}

\noindent with $D$ depending on $B$ and $l$. This means that $f_n(h_n(t))$ is rate optimal over $\Sigma(t)$. Thanks to it, we shall now
construct adaptive estimators, rate optimal over a range of regularity indexes, by proving Theorem~\ref{thm5}.

\subsubsection{Proof of Theorem~\ref{thm5} : existence of adaptive estimators}

\label{pth5}

The proof is an adaptation of that of Giné and Nickl \cite{GN2} in the density case, which uses Lepski's method.
For each $0 < t \leq l$, we have an estimator $f_n(h_n(t))$ rate optimal over $\Sigma(t)$. We want to devise a procedure which allows us
to choose $h_n$ according only to the data, such that if $f \in \Sigma(t)$, $h$ will be roughly $h(t)$.
Lepski's method consists in discretising the set of all possible bandwiths, and choosing
$h_n$ so that, for all $h \simeq h_n(t) < h_n$, the distance between $f_n(h)$ and $f_n(h_n)$ is of the order
the optimal rate for $\Sigma(t)$, $r_n(t)$. \newline

Fix $\rho > 1$. Define

\begin{equation*}
\mathcal{H} = \left\lbrace \rho^{-k} \, \left\vert \, k \geq 0, \, \rho^{-k} > \frac{(\log n)^2}{n} \right. \right\rbrace
\end{equation*}
and
\begin{equation}
\label{ineqm}
\hat{h}_n = \max \left\lbrace h \in \mathcal{H} \, \left\vert \,
\forall g < h, \, \Vert f_n(\hat{h}_n ) - f_n(g) \Vert_{\infty} \leq \left( M \frac{\log n}{ng} \right)^{1/2} \right. \right\rbrace
\end{equation}
with $M = 16(\sqrt{2\sigma c_1K} + c_2)^2$ ; $K$ is choosen later. $|\mathcal{H}| \simeq \log n$. \newline
Indeed, for all $t>0$, $\frac{\log n}{n} \leq h_n(t) \leq 1$ and we choose $(\log n)^2$ for practical reasons (this choice is not restrictive since
indeed, what matters in this ratio is the power of $n$, not that of the logarithm).

The adaptive estimator is $\hat{f}_n = f_n(\hat{h}_n )$ but we will write $f_n(\hat{h}_n )$ in the proof.
Fix now $t>0$ and $f \in \Sigma(t)$. Define
\begin{equation*}
h_f = \max \left\lbrace h \in \mathcal{H} \left\vert \, B(h,f) \leq \frac{\sqrt{M}}{4} \sigma(h,n) \right.  \right\rbrace,
\end{equation*}
which verifies, thanks to Theorem~\ref{lpe},
\begin{equation*}
h_f \simeq \left( \frac{\log n}{n} \right)^{\frac{1}{2t+1}}.
\end{equation*}

We now bound $ E \Vert f_n(\hat{h}_n) - f \Vert_{\infty}$. We distinguish the cases $\lbrace \hat{h}_n \geq h_f \rbrace$ and $\lbrace \hat{h}_n < h_f \rbrace$.

First,
\begin{eqnarray*}
& & E \Vert \frac{}{} f_n(\hat{h}_n) - f \Vert_{\infty} I_{\hat{h}_n \geq h_f} \\
&\leq & E \left( \Vert f_n(\hat{h}_n) - f_n(h_f) \Vert_{\infty} + \frac{}{} \Vert f_n(h_f) - E f_n(h_f) \Vert_{\infty}
+ \Vert E f_n(h_f) - f \Vert_{\infty} \right) I_{\hat{h}_n \geq h_f} \\
&\leq & E \left( \sum_{h \geq h_f} \Vert f_n(h) - f_n(h_f) \Vert_{\infty} I_{\hat{h}_n = h}
\frac{}{} \right)
+ E \Vert f_n(h_f) - E f_n(h_f) \Vert_{\infty}
+ \Vert E f_n(h_f) - f \Vert_{\infty} \\
&\leq & \sqrt{M} \sigma(h_f,n) P(\hat{h}_n \geq h_f)
+ E \Vert f_n(h_f) - E f_n(h_f) \Vert_{\infty}
+ \Vert E f_n(h_f) - f \Vert_{\infty} \\
&\leq & \sqrt{M} \sigma(h_f,n) + c_2 \sigma(h_f,n) + \frac{\sqrt{M}}{4} \sigma(h_f,n) = O(\sigma(h_f,n))
\end{eqnarray*}

Then,
\begin{eqnarray*}
& & E \Vert f_n(\hat{h}_n ) - f \Vert_{\infty} I_{\hat{h}_n < h_f} \\
&= & \sum_{h \in \mathcal{H}, h < h_f} E \left( \frac{}{} \Vert f_n(\hat{h}_n) - f \Vert_{\infty} I_{\hat{h}_n = h} \right) \\
&\leq & \sum_{h \in \mathcal{H}, h < h_f} E \left( \frac{}{} \Vert f_n(\hat{h}_n) - Ef_n(\hat{h}_n) \Vert_{\infty}
+ \Vert Ef_n(\hat{h}_n) - f \Vert_{\infty} \right) I_{\hat{h}_n = h} \\
&\leq & \sum_{h \in \mathcal{H}, h < h_f} \left( \frac{}{} E \left( \frac{}{} \Vert f_n(\hat{h}_n) - Ef_n(\hat{h}_n) \Vert_{\infty} I_{\hat{h}_n = h} \right)
+ \Vert Ef_n(\hat{h}_n) - f \Vert_{\infty} E \left(I_{\hat{h}_n = h} \right) \right) \\
&\leq & \sum_{h \in \mathcal{H}, h < h_f}  \left( \frac{}{} c_2 \sigma \left(\frac{(\log n)^2}{n},f \right) P\left( \hat{h}_n = h \right)^{1/2} \right) + B(h_f,f)P(\hat{h}_n < h_f)\\
&\leq & \frac{c_2}{\sqrt{\log n}} \sum_{h \in \mathcal{H}, h < h_f} P\left( \hat{h}_n = h \right)^{1/2} + B(h_f,f).\\
\end{eqnarray*}

Now, for $h< h_f$, and writing $h^+ = \rho h$,
\begin{eqnarray*}
 P\left( \hat{h}_n = h \right) & \leq & P \left( \text{there is $g \leq h$, such that $h^+$ and $g$ do not satisfy the inequality}~\ref{ineqm} \right) \\
& \leq & \sum_{g \leq h, g \in \mathcal{H}} P \left( \frac{}{} \Vert f_n(h^+) - f_n(g) \Vert_{\infty}
\geq \sqrt{M} \left(\frac{\log n}{ng} \right)^{1/2} \right) \\
& \leq & \sum_{g \leq h, g \in \mathcal{H}} P \left( \frac{}{} \Vert f_n(h^+) - f_n(g) \Vert_{\infty} \geq \sqrt{M} \sigma(g,n) \right) \\
\end{eqnarray*}

For $g \leq h$, we have

\begin{eqnarray*}
& & \Vert f_n(h^+) - f_n(g) \Vert_{\infty} \\
& \leq &  \Vert f_n(h^+) - Ef_n(h^+) \Vert_{\infty} + \Vert Ef_n(h^+) - f \Vert_{\infty} \\
& & + \Vert f - Ef_n(g) \Vert_{\infty} + \Vert Ef_n(g) - f_n(g) \Vert_{\infty} \\ \vspace{1 mm}
&\leq& 2 B(h_f,f) + \Vert f_n(h^+) - Ef_n(h^+) \Vert_{\infty} + \Vert Ef_n(g) - f_n(g) \Vert_{\infty} \\ 
\end{eqnarray*}
since $B(h,f)$ increases with $h$,
\begin{eqnarray*}
&\leq& 2 \frac{\sqrt{M}}{4} \sigma(h_f,n) + \Vert f_n(h^+) - Ef_n(h^+) \Vert_{\infty}
+ \Vert Ef_n(g) - f_n(g) \Vert_{\infty} \\
&\leq& \frac{\sqrt{M}}{2} \sigma(g,n)  + \Vert f_n(h^+) - Ef_n(h^+) \Vert_{\infty} + \Vert Ef_n(g) - f_n(g) \Vert_{\infty}
\end{eqnarray*}
since $\sigma(h,f)$ decreases with $h$,
so that,
\begin{eqnarray*}
&& P \left( \frac{}{} \Vert f_n(h^+) - f_n(g) \Vert_{\infty} \geq \sqrt{M} \sigma(g,n) \right) \\
&\leq& P \left( \frac{}{} \Vert f_n(h^+) - Ef_n(h^+) \Vert_{\infty} + \Vert Ef_n(g) - f_n(g) \Vert_{\infty}
\geq \sqrt{M} \sigma(g,n) - \frac{\sqrt{M}}{2} \sigma(g,n) \right) \\
&\leq& P\left( \frac{}{} \Vert f_n(h^+) - Ef_n(h^+) \Vert_{\infty} \geq  \frac{\sqrt{M}}{4} \sigma(h^+,n) \right)
+  P\left( \frac{}{} \Vert f_n(g) - Ef_n(g) \Vert_{\infty} \geq  \frac{\sqrt{M}}{4} \sigma(g,n) \right) \\
& \leq & 4 \exp(-K \log n)
\end{eqnarray*}
thanks to Proposition~\ref{prop5}. Therefore,

\begin{eqnarray*}
P(\hat{h}_n = h) &\leq& \sum_{g \leq h, g \in \mathcal{H}} P \left( \frac{}{} \Vert f_n(h^+) - f_n(g) \Vert_{\infty} \geq \sqrt{M} \sigma(g,n) \right) \\
& \leq & 4 \exp(-K \log n) \log n,
\end{eqnarray*}
and
\begin{eqnarray*}
& & E \Vert f_n(h_n) - f \Vert_{\infty} I_{h_n < h_f} \\
& \leq & \frac{c_2}{\sqrt{\log n}} \sum_{h \in \mathcal{H}, h < h_f} P\left(\hat{h}_n = h \right)^{1/2} + B(h_f,n) \\
& \leq & \frac{c_2}{\sqrt{\log n}} \left( \frac{}{} 4 \exp(-K \log n) \log n \right)^{1/2} \log n + B(h_f,n) \\
&=& O(\sigma(h_f,n))
\end{eqnarray*}
for $K$ large enough, choosen independently of $f$ or $t$, which completes the proof of Theorem~\ref{thm5}.

\subsection{A concentration inequality}

The following proposition is a key result which allows us to prove both the honesty and adaptivity of $C_n$ in Section~\ref{ub}
as well as the existence of the adaptive estimators.
Indeed, it gives an upper bound for the concentration of $f_n(h)$ around its expectation.
The proof relies on an inequality
due to Borell \cite{Bo}, which may be found in Theorem 1.7 in \cite{ledoux} in the form we use it.

\begin{theorem}
\label{Borell}
Let $G(x)$, $x \in T$, be a centered Gaussian process indexed by the countable set $T$, and such that $\sup_{x \in T} |G(x)| < \infty$ almost surely. Then $E \sup_{x \in T} |G(x)| < \infty$, and for every $r \geq 0$ we have 
$$
P \left\lbrace \left\vert \sup_{x \in T} |G(x)| - E \sup_{x \in T} |G(x)| \right\vert \geq r \right\rbrace \leq 2 e^{-r^2/2\sigma_0^2}
$$
where $\sigma_0^2 = sup_{x \in T} E(G^2(x)) < \infty$.
\end{theorem}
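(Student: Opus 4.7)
The plan is to deduce this from the Gaussian isoperimetric inequality, or equivalently from the Borell--Sudakov--Tsirelson concentration inequality for Lipschitz functions of a standard Gaussian vector. By countability of $T$ and monotone convergence, it suffices to establish the bound with $T$ replaced by an arbitrary finite subset $T_0 \subset T$; this passes to $T$ because $\sup_{x \in T_0} |G(x)| \uparrow \sup_{x \in T} |G(x)|$ along any countable exhaustion of $T$.

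For a finite $T_0 = \{x_1, \dots, x_N\}$, the Gaussian vector $(G(x_i))_{i=1}^N$ admits a representation $G(x_i) = \langle v_i, Z \rangle$ for some deterministic vectors $v_i \in \mathbb{R}^N$ and $Z \sim \mathcal{N}(0, I_N)$, obtained for instance from the square root of the covariance matrix, with $\|v_i\|^2 = E G(x_i)^2 \leq \sigma_0^2$. The map $F(z) = \sup_i |\langle v_i, z \rangle|$ is then $\sigma_0$-Lipschitz on $\mathbb{R}^N$: for $z, z' \in \mathbb{R}^N$,
$$
|F(z) - F(z')| \leq \sup_i |\langle v_i, z - z' \rangle| \leq \sigma_0 \, \|z - z'\|.
$$
Applying the Gaussian concentration inequality for Lipschitz functions, itself a direct consequence of Gauss-space isoperimetry, yields $P(|F(Z) - E F(Z)| \geq r) \leq 2 e^{-r^2/(2\sigma_0^2)}$. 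Passing to the limit $T_0 \uparrow T$ and invoking monotone convergence to identify the expectations finishes the argument.

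The remaining point is the finiteness of $E \sup_{x \in T} |G(x)|$ and of $\sigma_0^2$. Almost sure finiteness of a Gaussian supremum does not soft-imply integrability, but Fernique's theorem does: if $\sup |G|$ is almost surely finite, then it has Gaussian tails, hence moments of all orders. This in turn gives $\sigma_0^2 \leq E \bigl(\sup_y |G(y)|\bigr)^2 < \infty$ via the pointwise bound $|G(x)| \leq \sup_y |G(y)|$. The integrability step, rather than the concentration step, is the main obstacle; once it is secured, the rest is a routine application of the Gaussian concentration inequality to the Lipschitz functional $F$.
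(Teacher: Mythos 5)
The paper does not prove Theorem~\ref{Borell} at all: it is quoted as a known result, with pointers to Borell's original paper \cite{Bo} and to Theorem 1.7 of Ledoux \cite{ledoux}. So there is no internal proof to compare against, and the question is simply whether your argument is sound and how it relates to the standard one.

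Your proof is correct and follows the route that underlies Borell's inequality in Ledoux's treatment: reduce by a finite-dimensional exhaustion, write the finite Gaussian vector as $G(x_i)=\langle v_i,Z\rangle$ with $Z$ standard, check that $z\mapsto\sup_i|\langle v_i,z\rangle|$ is $\sigma_0$-Lipschitz, invoke the Gaussian isoperimetric (equivalently Borell--Sudakov--Tsirelson) concentration inequality for Lipschitz functions, and pass to the limit. Two points are worth tightening. First, the limiting step is not quite monotone convergence: while $S_n=\sup_{T_n}|G|\uparrow S$ a.s. and $\mu_n=ES_n\uparrow ES$ by MCT, the random variables $|S_n-\mu_n|$ are not monotone, so the closing inequality requires Fatou's lemma (or the portmanteau argument for convergence in distribution) applied to $\{|S_n-\mu_n|>r-\delta\}$ with $\delta\downarrow 0$. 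Second, you invoke Fernique's theorem to secure $E\sup_T|G|<\infty$; this is valid (Fernique's proof is independent of concentration, so there is no circularity), but the route in Ledoux is more self-contained: prove concentration around the \emph{median} first (the median is always finite when $\sup|G|<\infty$ a.s.), read off Gaussian tails and hence finiteness of the mean from that, and only then restate the inequality around the mean. Either way closes the gap; the median bootstrap simply avoids importing an external theorem. With these expository refinements your argument is a complete and faithful proof of the statement.
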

 
\begin{proposition}
\label{prop5}
Let $(\log n/n)^{1/(2l+1)} \geq h \geq \log n/n$ and $c_1$ and $c_2$ the constants in Theorem~\ref{lpe}. Let $f_n(h)$ be the local polynomial estimator of order $l$ for $f$.
Let $G_n = f_n(h) - E(f_n(h))$. Then,

\begin{equation}
\sigma_0^2 = E(G_n^2) \leq \frac{\sigma^2 c_1^2}{nh} \label{2} 
\end{equation}
\begin{equation}
\forall u \geq c_2 \left( \frac{\log n}{nh} \right)^{1/2},
P \left( \Vert G_n \Vert_{\infty} \geq u \right) \leq 2 \exp \left(-\frac{\log n}{2\sigma c_1}\left(\frac{u}{\left(\frac{\log n}{nh}\right)^{1/2}} - c_2\right)^2 \right). \label{3}
\end{equation}
\end{proposition}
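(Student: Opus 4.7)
The plan is to realize $G_n$ as a centered Gaussian process and apply Borell's inequality (Theorem~\ref{Borell}). Writing
\[
G_n(x) = f_n(h)(x) - E_f(f_n(h)(x)) = \sum_{i=1}^n W_{ni}(h,x)\,\varepsilon_i,
\]
it is immediate that $\{G_n(x)\}_{x \in [0,1]}$ is a centered Gaussian process, since each $\varepsilon_i$ is $\mathcal{N}(0,\sigma^2)$ and independent.

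For inequality (2), I would just compute the pointwise variance. Independence of the $\varepsilon_i$'s gives, for each $x \in [0,1]$,
\[
E(G_n(x)^2) = \sigma^2 \sum_{i=1}^n W_{ni}(h,x)^2 \leq \sigma^2 \Bigl(\sup_i |W_{ni}(h,x)|\Bigr) \sum_{i=1}^n |W_{ni}(h,x)|.
\]
The first two bullets of Theorem~\ref{lpe} bound these two factors by $c_1/(nh)$ and $c_1$ respectively, so taking the supremum over $x$ yields (2).

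For (3), I would first deal with the countability requirement in Borell's inequality. Since $x \mapsto W_{ni}(h,x)$ is continuous (last bullet of Theorem~\ref{lpe}), the process $G_n$ has continuous sample paths, hence $\sup_{x \in [0,1]} |G_n(x)| = \sup_{x \in [0,1] \cap \mathbb{Q}} |G_n(x)|$ almost surely, and Theorem~\ref{Borell} applies with $T = [0,1] \cap \mathbb{Q}$ and $\sigma_0^2 \leq \sigma^2 c_1^2/(nh)$ by (2). Next I would control $E\|G_n\|_\infty$ from above using the variance bound~\eqref{variance} in Theorem~\ref{lpe} together with Jensen's inequality:
\[
E\|G_n\|_\infty \leq \bigl(E\|G_n\|_\infty^2\bigr)^{1/2} \leq c_2 \left(\frac{\log n}{nh}\right)^{1/2}.
\]

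Finally, for $u \geq c_2\bigl(\log n/(nh)\bigr)^{1/2}$ I would set $r = u - E\|G_n\|_\infty$, which is nonnegative by the preceding display, so that
\[
P(\|G_n\|_\infty \geq u) \leq P\bigl(\|G_n\|_\infty - E\|G_n\|_\infty \geq r\bigr) \leq 2\exp\!\Bigl(-\tfrac{r^2}{2\sigma_0^2}\Bigr),
\]
and a direct manipulation with $\sigma_0^2 \leq \sigma^2 c_1^2/(nh)$ rewrites the exponent in the form appearing in (3) after factoring $\sqrt{\log n/(nh)}$ inside the square. There is really no hard step here: the whole argument is a routine application of Borell's inequality, and the only place that needs a moment's care is the passage from the supremum over $[0,1]$ to a supremum over a countable set, which is covered by the continuity of the weights $W_{ni}$.
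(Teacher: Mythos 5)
Your argument follows the paper's proof essentially step for step: Gaussian realization of $G_n$, the same pointwise variance bound for~\eqref{2}, passage to a countable supremum via continuity of the weights, Jensen together with~\eqref{variance} to bound $E\Vert G_n \Vert_\infty$, and then Borell's inequality; the proof is correct. One small point worth flagging: the ``direct manipulation'' with $\sigma_0^2 \leq \sigma^2 c_1^2/(nh)$ actually yields the exponent $-\frac{\log n}{2\sigma^2 c_1^2}\left(\frac{u}{(\log n/(nh))^{1/2}} - c_2\right)^2$, so the factor $2\sigma c_1$ appearing in~\eqref{3} (and carried into Corollary~\ref{kerk}) should read $2\sigma^2 c_1^2$; your computation produces the correct constant and the paper's display contains a small slip.
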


\begin{proof}
Obviously, Equation~\ref{eqmd} and Equation~\ref{fn*} imply that $(G_n(x), \hspace{1 mm} 0 \leq x \leq 1)$,
which satisfies for all $ 0 \leq x \leq 1$,
\begin{eqnarray*}
G_n(x) &= &f_n(x) - E (f_n(x)) \\
&=& \sum_{i=1}^{n} W_{ni}(x)Y_i - E \left( \sum_{i=1}^{n} W_{ni}(x)Y_i \right) \\
&=& \sum_{i = 1}^n W_{ni}(x) \varepsilon_i 
\end{eqnarray*}
is a gaussian process.

\begin{itemize}

\item Recall the definition of $f_n$ and that the $\varepsilon_i$'s are i.i.d. $\mathcal{N}(0,\sigma^2)$. Then,
\begin{eqnarray*}
\sigma_0^2 &=& \sup_{x \in [0,1]} E(G_n^2(x)) \\
           &=& \sup_{x \in [0,1]} E\left(\sum_{i=1}^{n} W_{ni}(x) \varepsilon_i \right)^2 \\
           &=& \sup_{x \in [0,1]} \sum_{i=1}^{n} W_{ni}^2(x) \sigma^2 \\
           & \leq & \sigma^2 \Vert W_{ni} \Vert_{\infty} \sup_{x \in [0,1]} \sum_{i=1}^{n} \vert W_{ni}(x) \vert \\ 
           & \leq & \frac{\sigma^2 c_1^2}{nh}. \\
\end{eqnarray*}
\item Then, we want to bound the probability 

\begin{eqnarray*}
P \left(\Vert G_n \Vert_{\infty} \geq u \right) &=& P \left(\Vert G_n \Vert_{\infty} - E \Vert G_n \Vert_{\infty} \geq u - \Vert G_n \Vert_{\infty} \right) \\
& \leq & P \left(\left\vert \frac{}{} \Vert G_n \Vert_{\infty} - E \Vert G_n \Vert_{\infty} \right\vert \geq u - \Vert G_n \Vert_{\infty} \right) \\
&=& P \left(\left\vert \frac{}{} \sup_{x \in [0,1] \cap \mathbb{Q}} |G_n(x)| - E \sup_{x \in [0,1] \cap \mathbb{Q}} |G_n(x)| \right\vert \geq u - \Vert G_n \Vert_{\infty} \right) \\
\end{eqnarray*}
since $G_n$ is continuous.
Now, if $u - \Vert G_n \Vert_{\infty} \geq 0$, we can apply Theorem~\ref{Borell} to $(G_n(x), \hspace{1 mm} x \in [0,1] \cap \mathbb{Q})$ and write
\begin{equation*}
\label{Bor}
P \left(\Vert G_n \Vert_{\infty} \geq u \right) \leq 2 \exp \left( -\frac{(u-E \Vert G_n \Vert_{\infty})^2}{2\sigma_0^2} \right).
\end{equation*}

Finally, thanks to \ref{variance} and \ref{2}, for $u \geq c_2\left( \frac{\log n}{nh} \right)^{1/2}$,
\begin{equation*}
P \left( \Vert G_n \Vert_{\infty} \geq u \right) \leq 2 \exp \left(-\frac{\log n}{2\sigma c_1}\left(\frac{u}{\left(\frac{\log n}{nh}\right)^{1/2}} - c_2\right)^2 \right).
\end{equation*}
\end{itemize}
\end{proof}

In the proof of section~\ref{ub} we only need the following result, obtained with $h = \left( \frac{\log n}{n} \right)^{1/(2r+1)}$.

\begin{corollary}
\label{kerk}
Let $C \geq 0$.
Take $u = (c_2 + C) r_n(r)$, then
$$
P \left( \Vert G_n \Vert_{\infty} \geq u \right) \leq 2 n^{-C^2/2 \sigma c_1}.
$$
\end{corollary}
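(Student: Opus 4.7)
The plan is essentially to apply Proposition~\ref{prop5} with the prescribed choice $h = (\log n/n)^{1/(2r+1)}$ and a chosen $u$, and then just compute. The only piece of real content is the identification $(\log n/(nh))^{1/2} = r_n(r)$ for this particular $h$.

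First I would carry out that identification: with $h = (\log n/n)^{1/(2r+1)}$, one has
\begin{equation*}
\frac{\log n}{nh} = \left(\frac{\log n}{n}\right)^{1 - 1/(2r+1)} = \left(\frac{\log n}{n}\right)^{2r/(2r+1)},
\end{equation*}
so $(\log n/(nh))^{1/2} = (\log n/n)^{r/(2r+1)} = r_n(r)$. Hence the choice $u = (c_2 + C) r_n(r)$ rewrites as $u = (c_2 + C)(\log n/(nh))^{1/2}$.

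Next I would check the hypothesis of inequality~\ref{3} in Proposition~\ref{prop5}, namely $u \geq c_2 (\log n/(nh))^{1/2}$. This is immediate from $C \geq 0$ since $u = (c_2+C) r_n(r) \geq c_2 r_n(r)$. Also, the range of $h$ required by Proposition~\ref{prop5}, $\log n/n \leq h \leq (\log n/n)^{1/(2l+1)}$, holds for all $n$ large enough because $l \geq s > r$.

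Finally I would plug in: the quantity inside the square in \ref{3} becomes
\begin{equation*}
\frac{u}{(\log n/(nh))^{1/2}} - c_2 = (c_2+C) - c_2 = C,
\end{equation*}
so Proposition~\ref{prop5} gives
\begin{equation*}
P(\Vert G_n \Vert_{\infty} \geq u) \leq 2 \exp\!\left(-\frac{C^2 \log n}{2\sigma c_1}\right) = 2 n^{-C^2/(2\sigma c_1)},
\end{equation*}
which is the claim. There is no genuine obstacle here; the only thing to watch is that one has simplified $(\log n/(nh))^{1/2}$ to $r_n(r)$ correctly, and that the probability bound is stated for the particular $h$ that realises this simplification.
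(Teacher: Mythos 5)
Your proposal is correct and takes the same route as the paper: the paper gives no separate argument for Corollary~\ref{kerk}, presenting it as an immediate consequence of Proposition~\ref{prop5} with $h = (\log n / n)^{1/(2r+1)}$, and your write-up simply fills in the arithmetic that the paper leaves implicit (the identification $(\log n/(nh))^{1/2} = r_n(r)$, the verification that $u \geq c_2(\log n/(nh))^{1/2}$, and the check that $h$ lies in the admissible bandwidth range).
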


\textbf{Acknowledgements.} We would like to thank the Statistical Laboratory of Cambridge University which welcomed us for the
duration of our research internship. We are particularly indebted to
Richard Nickl who suggested the topic and made himself available every time we asked for advices,
so that we could write this note.


\begin{thebibliography}{9}

\bibitem{Bo}
     \textsc{Borell, C.}
     (1975).
     The Brunn-Minkowski inequality in Gauss space.
     \textit{Invent. Math.}
     \textbf{30}
     \text{207-216}.

\bibitem{BrLw}
     \textsc{Brown, L. D., Low, M. G.}
     (1996).
     Asympotic equivalence of nonparametric regression and white noise.
     \textit{Ann. Stat.}
     \textbf{24}
     \text{2384-2398}.

\bibitem{Bull}
     \textsc{Bull, A. D.}
     (2011).
     Honest adaptive confidence bands and self-similar functions.
     To appear.

\bibitem{CVK}
     \textsc{Claeskens, G., Van Keilegom, I.} 
     (2003).
     Bootstrap confidence bands for regression curves and their derivatives.
     \textit{Ann. Stat.}
     \textbf{31}
     \text{1852-1884}.
          
\bibitem{Cohen}
     \textsc{Cohen, A., Daubechies, I. and Vial, P.} 
     (1993).
     Wavelets on the interval and fast wavelet transforms.
     \textit{Appl. Compute. Harmon. Anal.}
     \textbf{1}
     \text{54-81.}

\bibitem{GW}
     \textsc{Genovese, C., Wasserman, L.} 
     (2008).
     Adaptive confidence bands.
     \textit{Ann. Stat.}
     \textbf{36}
     \text{875–905}.

\bibitem{GN}
     \textsc{Giné, E., Nickl, R.} 
     (2010).
     Confidence bands in density estimation.
     \textit{Ann. Stat.}
     \textbf{38}
     \text{1122–1170}.

\bibitem{GN2}
     \textsc{Giné, E., Nickl, R.} 
     (2009).
     An exponential inequality for the distribution function of the kernel density estimator, with applications to adaptive estimation.
     \textit{Probab. Theory Related Fields}
     \textbf{143}
     \text{569-596}.

\bibitem{Gold}
     \textsc{Goldenshluger, A., Lepski, O.} 
     (2009).
     Structural adaptation via $\mathbb{L}_p$-norm oracle inequalitites.
     \textit{Probab. Theory Related Fields}
     \textbf{143}
     \text{41-71.}

\bibitem{HN}
     \textsc{Hoffmann, M. and Nickl, N.} (2011).
     \text{On adaptive inference and confidence bands}.
     \textit{Ann. Stat.}
     \textbf{39}
     \text{2383-2409.}

\bibitem{ledoux}
     \textsc{Ledoux, M.}
     (2001).
     The Concentration of measure phenomenon.
     American Mathematical Society.
     
\bibitem{Lepski}
     \textsc{Lepski, O. and Tsybakov, A.}
     (2000).
     Asymptotically exact nonparametric hypothesis testing in sup-norm and at a fixed point.
     \textit{Probability Theory and Related Fields.}
     \textbf{117-1}
     \text{17-48.}

\bibitem{Low}
     \textsc{Low, M. G.} 
     (1997).
     On nonparametric confidence intervals.
     \textit{Ann. Stat.}
     \textbf{25}
     \text{2547–2554.}

\bibitem{Meyer}
    \textsc{Meyer, Y.}
     (1992).
     Wavelets and Operators.
     \textit{Cambridge Studies in Advanced Mathematics}
     \textbf{37}.
     \text{Cambridge Univ. Press, Cambridge.}
     
\bibitem{Tsyba}
     \textsc{Tsybakov, A.}
     (2008).
     Introduction to nonparametric estimation.
     Springer.
     
\bibitem{BE}
     \textsc{Von Bahr, B., Esseen, C.}
     (1965).
     Inequalities for the $r$th absolute moment of a sum of random variables $1 \leqq r \leqq 2$.
     \textit{Ann. Stat.}
     \textbf{36}
     \text{299-303}.

\end{thebibliography}
\end{document}